\documentclass[a4paper]{article}
%
%
\pdfoutput=1
\usepackage{amsmath}
\usepackage{amsthm}
\usepackage{amssymb}
\usepackage{titlefoot} 
\usepackage{booktabs}
\usepackage{subcaption}
\usepackage{authblk}
\usepackage{comment}
\usepackage[hidelinks]{hyperref}
\usepackage{xstring}
\usepackage{tikz}
\usetikzlibrary{arrows,math,decorations.pathmorphing}
\tikzset{wavy/.style={decorate, decoration=snake}}

%
%
\theoremstyle{plain}
\newtheorem{theorem}{Theorem}
\newtheorem{lemma}[theorem]{Lemma}
\newtheorem{corollary}[theorem]{Corollary}
\newtheorem{proposition}[theorem]{Proposition}
\newtheorem{fact}[theorem]{Fact}

\theoremstyle{definition}

\newtheorem{conjecture}[theorem]{Conjecture}

\newtheorem{problem}[theorem]{Problem}

\theoremstyle{remark}

%
%
\newcommand{\order}[1]{\ensuremath{\left\lvert#1\right\rvert}}
\newcommand{\mob}{M\"{o}bius }
\newcommand{\mobfn}[2]{\mu[#1,#2]}
\newcommand{\mobxfn}[3]{\mu_#1[#2,#3]}
\newcommand{\mobp}[1]{\mu[#1]} 
\newcommand{\emptyperm}{\epsilon}

\newcommand{\inflateall}[2]{#1[#2]}
\newcommand{\inflatesome}[3]{#1_{#2}[#3]}

\newcommand{\cS}{\mathcal{S}} 
\DeclareMathOperator{\Img}{Img} 
\DeclareMathOperator{\src}{src}

\newcommand{\sE}{\mathcal{E}} 
\newcommand{\sNE}{\mathcal{NE}} 
\newcommand{\NE}{\mathrm{NE}} 

\newcommand{\sNEl}{\sNE_\lambda(*,\pi)}
\newcommand{\sNElo}{\sNE_\lambda(\mathrm{odd},\pi)}
\newcommand{\sNEle}{\sNE_\lambda(\mathrm{even},\pi)}


\newcommand{\bbR}{\mathbb{R}} 

\newcommand{\btau}{\tau^*}
\newcommand{\bphi}{{\phi^*}}

\newcommand{\cO}{O} 
%
%
%
\newcommand{\point}[2]{
	\filldraw (#1,#2) circle (9pt);%
}
%
%
\newcommand{\perm}[1]{ 
	\StrLen{#1}[\n];
	\foreach \i in {0,...,\n}{%
		\draw [color=lightgray] ({\i+0.5}, {0.5})--({\i+0.5}, {\n+0.5});%
		\draw [color=lightgray] ({0.5}, {\i+0.5})--({\n+0.5}, {\i+0.5});%
	};%
   \foreach \i in {1,...,\n}{%
    	\StrChar{#1}{\i}[\j];
		\point{\i}{\j};%
	};
	\node (l#1) [align=center, below] at ({\n/2+0.5},{0}) {#1};%
    \node (u#1) [align=center, above] at ({\n/2+0.5},{\n+0.5}) {};%
}
%
%
\newcommand{\permnode}[3]{
	\StrLen{#3}[\n];
    \foreach \i in {0,...,\n}{
    	\draw [color=lightgray] ({\i+0.5+#1-\n/2}, {0.5+#2})--({\i+0.5+#1-\n/2}, {\n+0.5+#2});
	    \draw [color=lightgray] ({0.5+#1-\n/2}, {\i+0.5+#2})--({\n+0.5+#1-\n/2}, {\i+0.5+#2});
    };
   \foreach \i in {1,...,\n}{%
   	    \StrChar{#3}{\i}[\j];
     	\point{\i+#1-\n/2}{\j+#2};%
    };
    \node (l#3) [align=center, below] at ({\n/2+0.5+#1-\n/2},{#2+0.3}) {\scriptsize{#3}}; %
    \node (u#3) [align=center, above] at ({\n/2+0.5+#1-\n/2},{\n+#2-0.2}) {}; %
}
%
%
\newcommand{\link}[2]{
    \draw (l#1) -- (u#2);	
}
%
%
\newcommand{\dnode}[3]{\node (p#1) at (#2,#3)  {\scriptsize{$\bullet$}}}
%
%
\newcommand{\tnode}[4]{%
    \node (p#1) at (#2,#3)%
    {#4};%
}
%
%
\newcommand{\dline}[2]{
    \foreach \i in {#2} {
        \draw (p#1)  -- (p\i);
    };
}
%
%
\newcommand{\fpoint}[2]{
	\filldraw (#1,#2) circle (9pt);%
}
%
%
\newcommand{\hpoint}[2]{
	\filldraw (#1,#2) circle (9pt);%
	\filldraw [color=white] (#1,#2) circle (6pt);
}
%
%
\newcommand{\sqat}[3]{
	\foreach \i in {0,1,...,#1}{
	    \draw [color=darkgray] ({\i+0.5+#2}, 0.2+#3)--({\i+0.5+#2}, {#1+0.8+#3});
    };
    \foreach \i in {0,1,...,#1}{
	    \draw [color=darkgray] ({0.3+#2}, {\i+0.5+#3})--({#1+0.8+#2}, {\i+0.5+#3});
    };
}

%
%
\newcommand{\zpmfr}{0.3995}
%
%
\makeatletter
\def\anonfootnote{\xdef\@thefnmark{}\@footnotetext}
\makeatother
\hypersetup{pdfpagemode=UseNone, pdfstartview=} 

\if10     
\usepackage[mathlines]{lineno}
\newcommand*\patchAmsMathEnvironmentForLineno[1]{%
  \expandafter\let\csname old#1\expandafter\endcsname\csname #1\endcsname
  \expandafter\let\csname oldend#1\expandafter\endcsname\csname end#1\endcsname
  \renewenvironment{#1}%
     {\linenomath\csname old#1\endcsname}%
     {\csname oldend#1\endcsname\endlinenomath}}%
\newcommand*\patchBothAmsMathEnvironmentsForLineno[1]{%
  \patchAmsMathEnvironmentForLineno{#1}%
  \patchAmsMathEnvironmentForLineno{#1*}}%
\AtBeginDocument{%
\patchBothAmsMathEnvironmentsForLineno{equation}%
\patchBothAmsMathEnvironmentsForLineno{align}%
\patchBothAmsMathEnvironmentsForLineno{flalign}%
\patchBothAmsMathEnvironmentsForLineno{alignat}%
\patchBothAmsMathEnvironmentsForLineno{gather}%
\patchBothAmsMathEnvironmentsForLineno{multline}%
}
\linenumbers
\fi
%
%
%

\setcounter{Maxaffil}{0} 

\title{Zeros of the \mob function of permutations\thanks{V. Jel{\'i}nek and 
J. Kyn{\v c}l were supported by project 16-01602Y of the Czech Science Foundation 
(GA\v{C}R). J. Kyn{\v c}l was also supported by Charles University project 
UNCE/SCI/004. \textit{Email:} robert.brignall@open.ac.uk, 
jelinek@iuuk.mff.cuni.cz, kyncl@kam.mff.cuni.cz, david.marchant@open.ac.uk  }
}
\author[1]{Robert Brignall}
\author[2]{V{\'i}t Jel{\'i}nek}
\author[3]{Jan Kyn{\v c}l}
\author[1]{David Marchant}
\affil[1]{School of Mathematics and Statistics\\ The Open University, Milton Keynes, MK7~6AA, UK}
\affil[2]{Computer Science Institute, Faculty of Mathematics and Physics, Charles University, Prague, Czech Republic}
\affil[3]{Department of Applied Mathematics, Faculty of Mathematics and Physics, Charles University, Prague, Czech Republic}

\amssubj{05A05} 
\begin{document}
\maketitle

\begin{abstract}  
We show that if a permutation $\pi$ contains two intervals of length 2, where one interval is an 
ascent and the other a descent, then the \mob function $\mobfn{1}{\pi}$ of the interval $[1,\pi]$ is 
zero. As a consequence, we show that the proportion of permutations of length $n$ with principal \mob 
function equal to zero is asymptotically bounded below by $(1-1/e)^2\ge\zpmfr$. This is the first 
result determining the value of $\mobfn{1}{\pi}$ for an asymptotically positive proportion of 
permutations~$\pi$. 

We also show that if a permutation $\phi$ can be expressed 
as a direct sum of the form $\alpha\oplus 1 \oplus\beta$, 
then any permutation $\pi$ containing an interval 
order-isomorphic to $\phi$ has $\mobfn{1}{\pi}=0$; 
we deduce this from a more general result showing that 
$\mobfn{\sigma}{\pi}=0$ whenever $\pi$ contains an interval of a certain form.
Finally, we show that if a permutation $\pi$ contains
intervals isomorphic to certain pairs of permutations, 
or to certain permutations of length six,
then $\mobfn{1}{\pi} = 0$.
\end{abstract}

\section{Introduction}

Let $\sigma$ and $\pi$ be permutations of positive integers. We say that $\pi$ \emph{contains} $\sigma$ 
if there is a subsequence of elements of $\pi$ that is order-isomorphic to $\sigma$. As an example, 
$3624715$ contains $3142$ as the subsequences $6275$ and $6475$. If $\sigma$ is contained in $\pi$,
then we write $\sigma \leq \pi$.

The set of all permutations is a poset under the partial order given by containment. A closed 
interval $[\sigma, \pi]$ in a poset is the set defined by $\{\tau : \sigma \leq \tau \leq \pi\}$,
and a half-open interval $[\sigma, \pi)$ is the set $\{\tau : \sigma \leq \tau < \pi\}$. The \mob 
function of an interval $[\sigma, \pi]$ is defined recursively as follows:
\[
\mobfn{\sigma}{\pi} 
=
\left\lbrace
\begin{array}{lll}
0 & \quad & \text{if $\sigma \not\leq \pi$}, \\
1 & \quad & \text{if $\sigma = \pi$}, \\
- \sum\limits_{\tau \in [\sigma, \pi)} \mobfn{\sigma}{\tau} & \quad & \text{otherwise.}
\end{array}
\right.
\]
From the definition of the \mob function
it follows that 
if $\sigma < \pi$, then
$\sum_{\tau \in [\sigma, \pi]} \mobp{\sigma, \tau} = 0$.

In this paper, we are mainly concerned with the 
\emph{principal \mob function} 
of a permutation $\pi$, written $\mobp{\pi}$,
defined by $\mobp{\pi} = \mobfn{1}{\pi}$.
We focus on the zeros of the principal \mob function, that is, on the permutations $\pi$ for which
$\mobp{\pi}=0$. We show that we can often determine that a permutation $\pi$ is a \mob zero by 
examining small localities of~$\pi$.  We formalize this idea using the notion of an 
``annihilator''. Informally, an annihilator is a permutation $\alpha$ such that any permutation 
$\pi$ containing an interval copy of $\alpha$ is a \mob zero. We will describe an infinite family of 
annihilators. 

We will also show that any permutation containing an increasing as well as a decreasing 
interval of size 2 is a \mob zero. Based on this result, we show that the asymptotic proportion 
of \mob zeros among the permutations of a given length is at least $(1-1/e)^2\ge \zpmfr$. 
This is the first known result determining the values of the principal \mob function for an 
asymptotically positive fraction of permutations. We will also show how our results on the 
principal \mob function can be extended to intervals whose lower bound is not~$1$.

The question of computing the \mob function in 
the permutation poset
was first raised by
Wilf~\cite{Wilf2002}.  
The first result was by Sagan and Vatter~\cite{Sagan2006}, 
who determined the \mob function 
on intervals of layered permutations.
Steingr\'{\i}msson and Tenner~\cite{Steingrimsson2010} found  
pairs of permutations $(\sigma, \pi)$ 
where $\mobfn{\sigma}{\pi} = 0$.

Burstein, Jel{\'{i}}nek, Jel{\'{i}}nkov{\'{a}} 
and Steingr{\'{i}}msson~\cite{Burstein2011} found
a recursion for the \mob function
for sum and skew decomposable permutations.
They used this to determine
the \mob function for separable permutations.
Their results 
for sum and skew decomposable permutations
implicitly include a result that only concerns small localities,
which is that, up to symmetry, 
if a permutation $\pi$ of length greater than two begins $12$,
then $\mobp{\pi} = 0$.

Smith~\cite{Smith2013}
found an explicit formula for the \mob function on the interval
$[1, \pi]$ for all permutations $\pi$ with a single descent.
Smith's paper includes a lemma
stating that if a permutation $\pi$
contains an interval order-isomorphic to
$123$, then $\mobp{\pi}=0$.
While the result in~\cite{Burstein2011} requires  
that the permutation starts with a particular sequence,
Smith's result is, in some sense, more general,
as the critical interval (123)
can occur in any position.
Smith's lemma may be viewed
as the first instance of an
annihilator result. Our results on annihilators provide a common generalization of Smith's 
lemma and the above mentioned result of Burstein et al.~\cite{Burstein2011}.

Smith~\cite{Smith2016}
has explicit expressions for the 
\mob function $\mobfn{\sigma}{\pi}$
when $\sigma$ and $\pi$ have the same number of descents.
In~\cite{Smith2016a}, Smith found
an expression that determines the
\mob function for all intervals in
the poset, although the expression
involves a rather complicated
double sum, starting with
$\sum_{\tau \in [\sigma, \pi)} \mobfn{\sigma}{\tau}$.

Brignall and Marchant~\cite{Brignall2017a}
showed that if the lower bound of an interval is indecomposable,
then the \mob function depends only on the indecomposable permutations 
contained in the upper bound, 
and used this result to find a fast polynomial algorithm
for computing $\mobp{\pi}$ where $\pi$
is an increasing oscillation.

\section{Definitions and notation}
\label{sect-definitions-and-notation}

We let $\cS_n$ denote the set of permutations of length~$n$. We represent a permutation $\pi\in\cS_n$ 
as a sequence $\pi_1,\pi_2,\dotsc,\pi_n$ of integers from the set $[n]=\{1,2,\dotsc,n\}$ in which each element of $[n]$ appears 
exactly once. We let $\emptyperm$ denote the unique permutation of length~$0$.

A sequence of numbers $a_1,a_2,\dots,a_n$ is \emph{order-isomorphic} to a sequence
$b_1,b_2,\allowbreak\dots,b_n$ if for every $i,j\in[n]$ we have $a_i<a_j \Leftrightarrow b_i<b_j$. 
A permutation $\pi \in\cS_n$ \emph{contains} a permutation 
$\sigma\in\cS_k$ if $\pi$ has a subsequence of length $k$  
order-isomorphic to~$\sigma$.

An \emph{interval} of a permutation $\pi$ is a non-empty set of contiguous indices $i, i+1, \ldots, 
j$ where the set of values $\{\pi_i, \pi_{i+1}, \ldots, \pi_j\}$ is also contiguous. We say that 
$\pi$ has an \emph{interval copy} of a permutation $\alpha$ if it contains an interval 
of length $\order{\alpha}$ whose 
elements form a subsequence order-isomorphic to~$\alpha$.

An \emph{adjacency} in a permutation is an interval of length two.
If a permutation contains a monotonic interval 
of length three or more, then each subinterval 
of length two is an adjacency.
As examples, $367249815$ has two adjacencies, $67$ and $98$;
and $1432$ also has two adjacencies, $43$ and $32$.
If an adjacency is ascending, then it is an 
\emph{up-adjacency}, otherwise it is a 
\emph{down-adjacency}.

If a permutation $\pi$ contains at least one
up-adjacency, and at least one down-adjacency,
then we say that $\pi$ has \emph{opposing adjacencies}.
An example of a permutation with
opposing adjacencies is $367249815$,
which is shown in Figure~\ref{figure-example-oppadj}.
\begin{figure}[!ht]
    \begin{center}
                \begin{tikzpicture}[scale=0.25]
                    \perm{367249815};
                    \draw [color=blue, very thick] (1.5, 5.5) rectangle (3.5, 7.5);
                    \draw [color=blue, very thick] (5.5, 7.5) rectangle (7.5, 9.5);    
                \end{tikzpicture}
    \end{center}%
    \caption{A permutation with opposing adjacencies.}
    \label{figure-example-oppadj}
\end{figure}

A permutation that does not 
contain any adjacencies is
\emph{adjacency-free}.
Some early papers use the term ``strongly irreducible''
for what we call adjacency-free permutations.  
See, for example, Atkinson and Stitt~\cite{Atkinson2002}.

Given a permutation $\sigma$ of length $n$, and
permutations $\alpha_1, \ldots, \alpha_n$,
not all of them equal to
the empty permutation $\emptyperm$,
the \emph{inflation} of $\sigma$ by $\alpha_1, \ldots, \alpha_n$, written as $\inflateall{\sigma}{\alpha_1, \ldots, \alpha_n}$,
is
the permutation obtained by 
removing the element $\sigma_i$
if $\alpha_i = \emptyperm$, and replacing $\sigma_i$
with an interval isomorphic to $\alpha_i$ otherwise.
Note that this is slightly different to the standard
definition of inflation, originally given in Albert and Atkinson~\cite{Albert2005},
which does not allow inflation by the empty permutation.
%
As examples,
$\inflateall{3624715}{1,12,1,1,21,1,1}=367249815$,
and
$\inflateall{3624715}{\emptyperm,1,1,\emptyperm,1,\emptyperm,1}=3142$.

In many cases we will be interested 
in permutations where most positions
are inflated by the singleton permutation $1$.
If $\sigma = 3624715$,
then 
we will write
$\inflateall{\sigma}{1,12,1,1,21,1,1} = 367249815$
as 
$\inflatesome{\sigma}{2,5}{12,21}$.
Formally, 
$\inflatesome{\sigma}{i_1, \ldots, i_k}{\alpha_1, \ldots, \alpha_k}$
is the inflation of $\sigma$ 
where $\sigma_{i_j}$ is inflated by $\alpha_{j}$ for
$j = 1, \ldots , k$, and all other positions of $\sigma$
are inflated by $1$. When using this notation, we always assume that the indices $i_1,\dotsc,i_k$ 
are distinct; however, we make no assumption about their relative order. 

Our aim is to study the \mob function of the permutation poset, that is, the poset of finite 
permutations ordered by containment. We are interested in describing general examples of 
intervals $[\sigma,\pi]$ such that $\mobfn{\sigma}{\pi}=0$, with particular emphasis on the case 
$\sigma=1$. We say that $\pi$ is a \emph{\mob zero} (or just \emph{zero}) if $\mobp{\pi}=0$, and 
we say that $\pi$ is a 
\emph{$\sigma$-zero} if $\mobfn{\sigma}{\pi}=0$.

It turns out that many sufficient conditions for $\pi$ to be a \mob zero can be stated in terms of 
inflations. We say that a permutation $\phi$ is an \emph{annihilator} if every permutation that has 
an interval copy of $\phi$ is a \mob zero; in other words, for every $\tau$ and every $i\le|\tau|$ 
the permutation $\tau_i[\phi]$ is a \mob zero. More generally, we say that $\phi$ is a 
\emph{$\sigma$-annihilator} if every permutation with an interval copy of $\phi$ is a 
$\sigma$-zero.

We say that a pair of permutations $\phi$, $\psi$ is an \emph{annihilator pair} if for every 
permutation $\tau$ and every pair of distinct indices $i,j\le |\tau|$, the permutation 
$\inflatesome{\tau}{i,j}{\phi,\psi}$ is a \mob zero.

Observe that for an annihilator $\phi$, any permutation containing an interval copy of $\phi$ is 
also an annihilator. Likewise, if $\phi$ and $\psi$ form an annihilator pair then any 
permutation containing disjoint interval copies of $\phi$ and $\psi$ is an annihilator.

As our first main result, presented in Section~\ref{sec-opposing}, we show that the two 
permutations $12$ and $21$ are an annihilator pair, or equivalently, any permutation with opposing 
adjacencies is a \mob zero. Later, in Section~\ref{section-bounds-for-zn}, we use this result
to prove that \mob zeros have asymptotic density at least $(1-1/e)^2$. 

We also show that for any two non-empty permutations $\alpha$ and $\beta$, the permutation 
$\alpha\oplus1\oplus\beta=\inflateall{123}{\alpha,1,\beta}$ is an annihilator, and generalize this result to a 
construction of $\sigma$-annihilators for general~$\sigma$. These results are presented in 
Section~\ref{sec-annihilator}.

Finally, in Section~\ref{sec-special}, we give several examples of annihilators and 
annihilator pairs that do not directly follow from the results in the previous sections.

\subsection{Intervals with vanishing \mob function}

We will now present several basic facts about the \mob function, which are valid in an arbitrary 
finite poset. The first fact is a simple observation following directly from the definition of the 
\mob function, and we present it without proof.

\begin{fact}\label{fac-del}
Let $P$ be a finite poset with \mob function $\mu_P$,  and let $x$ and $y$ be two elements of 
$P$ satisfying $\mobxfn{P}{x}{y}=0$. Let $Q$ be the poset obtained from $P$ by deleting the element $y$, 
and let $\mu_Q$ be its \mob function. Then for every $z\in Q$, we have $\mobxfn{Q}{x}{z}=\mobxfn{P}{x}{z}$.
\end{fact}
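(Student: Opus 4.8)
The plan is to prove the identity by strong induction on $\order{[x,z]_P}$, the number of elements of the closed interval $[x,z]$ in $P$. Before invoking the induction I would clear away the degenerate cases. Since $\mobxfn{P}{x}{y}=0\ne 1$, the recursive definition forces $x\ne y$, so $x$ survives the deletion and lies in $Q$. If $z\not\ge x$, then $z\not\ge x$ in the induced subposet $Q$ as well, and both \mob values are $0$; if $z=x$, then both equal $1$. This leaves the case $z>x$, which is where the argument has content.

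The one structural observation I would record at the outset is that $Q$ carries the order induced from $P$: for any two surviving elements the relation $\le$ is exactly the one inherited from $P$, so the only element on which $P$ and $Q$ can disagree inside any interval is $y$ itself. Concretely, $[x,z)_Q=[x,z)_P\setminus\{y\}$.

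For the inductive step with $z>x$, I would expand both sides using the defining recursion,
\begin{align*}
\mobxfn{Q}{x}{z} &= -\sum_{w\in[x,z)_Q}\mobxfn{Q}{x}{w},\\
\mobxfn{P}{x}{z} &= -\sum_{w\in[x,z)_P}\mobxfn{P}{x}{w}.
\end{align*}
Every $w$ appearing here satisfies $w<z$, hence $[x,w]_P\subsetneq[x,z]_P$ and the induction hypothesis applies: for each such $w$ with $w\ne y$ we may replace $\mobxfn{Q}{x}{w}$ by $\mobxfn{P}{x}{w}$. Because the only difference between the two index sets is the element $y$, subtracting the two expressions leaves exactly the contribution of $w=y$, giving $\mobxfn{P}{x}{z}-\mobxfn{Q}{x}{z}=-\mobxfn{P}{x}{y}$ when $y\in[x,z)_P$, and $0$ otherwise.

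The whole argument then closes on the single hypothesis of the statement: $\mobxfn{P}{x}{y}=0$ makes the leftover term vanish in both cases, so $\mobxfn{P}{x}{z}=\mobxfn{Q}{x}{z}$, as desired. I do not expect a genuine obstacle here --- this is a bookkeeping induction rather than a substantive one. The two points that merit a little care are (i) checking that passing to the induced subposet $Q$ leaves every relevant order relation, and hence every interval, intact apart from the removal of $y$, and (ii) confirming that the induction hypothesis is legitimately available for each $w<z$, which holds because $[x,w]_P$ is then a strictly smaller interval than $[x,z]_P$. Everything else is the defining recursion combined with the vanishing assumption $\mobxfn{P}{x}{y}=0$.
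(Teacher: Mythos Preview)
Your argument is correct. The paper itself does not give a proof of this fact at all; it states that it is ``a simple observation following directly from the definition of the \mob function'' and presents it without proof, so there is no approach to compare against. Your induction on $\order{[x,z]_P}$ is exactly the routine verification the authors had in mind, and the two points you flag for care---that the induced order on $Q$ leaves intervals intact except for the removal of $y$, and that the inductive hypothesis is available for every $w<z$---are indeed the only places where anything needs checking.
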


Next, we introduce two types of intervals whose specific structure ensures that their \mob function 
is zero.

Let $[x,y]$ be a finite interval in a poset $P$. We say that $[x,y]$ is \emph{narrow-tipped} if it 
contains an element $z$ different from $x$ such that $[x,y)=[x,z]$. The element $z$ is then called 
the \emph{core} of $[x,y]$. 

We say that the interval $[x,y]$ is \emph{diamond-tipped} if there are three elements $z$, $z'$ 
and $w$, all different from $x$, and such that $[x,y)=[x,z]\cup[x,z']$ and 
$[x,z]\cap[x,z']=[x,w]$. The triple of elements $(z,z',w)$ is again called the \emph{core} of 
$[x,y]$.  
Figure~\ref{fig-example-diamond-tipped-poset} shows examples
of narrow-tipped and diamond-tipped posets.
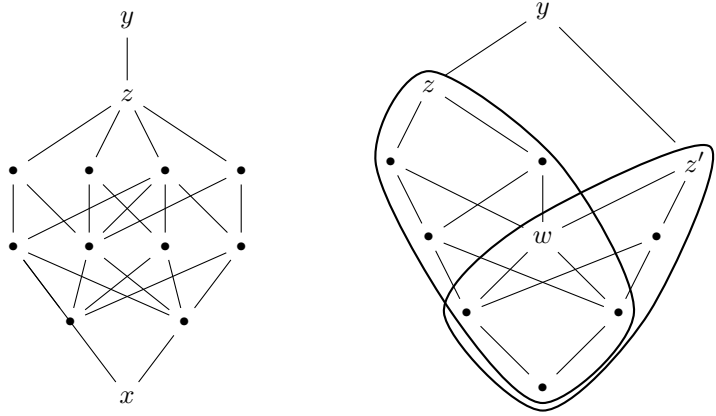
\begin{figure}
	\begin{center}
    	\begin{tikzpicture}[xscale=1,yscale=1]
			\tnode{5}{0}{5}{$y$};
			\tnode{4}{0}{4}{$z$};        
			\dnode{31}{-1.5}{3};
			\dnode{32}{-0.5}{3};
			\dnode{33}{0.5}{3};
			\dnode{34}{1.5}{3};
			\dnode{21}{-1.5}{2};
			\dnode{22}{-0.5}{2};
			\dnode{23}{0.5}{2};
			\dnode{24}{1.5}{2};
			\dnode{11}{-0.75}{1};
			\dnode{12}{0.75}{1};
			\tnode{0}{0}{0}{$x$};
			\dline{5}{4};
			\dline{4}{31,32,33,34};
			\dline{31}{21,22};
			\dline{32}{22,23};
			\dline{33}{21,22,23,24};
			\dline{34}{22,24};
			\dline{21}{11,12};
			\dline{22}{11,12};
			\dline{23}{11,12};
			\dline{24}{11,12};
			\dline{12}{0};
			\dline{21}{0};
		\end{tikzpicture}		
        \qquad\qquad
		\begin{tikzpicture}[xscale=1,yscale=1]
			\dnode{1}{0}{0};
			\dnode{12}{-1}{1};
			\dnode{21}{1}{1};
			\dnode{231}{-1.5}{2};
			\tnode{132}{0}{2}{$w$};
			\dnode{213}{1.5}{2};
			\dnode{2431}{-2}{3};
			\dnode{1342}{0}{3};
			\tnode{2143}{2}{3}{$z^\prime$};
			\tnode{13542}{-1.5}{4}{$z$};
			\tnode{214653}{0}{5}{$y$};
			\dline{1}{12};
			\dline{1}{21};
			\dline{12}{231};
			\dline{12}{132};
			\dline{12}{213};
			\dline{21}{231};
			\dline{21}{132};
			\dline{21}{213};
			\dline{231}{2431};
			\dline{231}{1342};
			\dline{132}{2431};
			\dline{132}{1342};
			\dline{132}{2143};
			\dline{213}{2143};
			\dline{2431}{13542};
			\dline{1342}{13542};
			\dline{2143}{214653};
			\dline{13542}{214653};
			\draw [thick] plot [smooth cycle] coordinates {
				(-1.5,  4.2) 
				(-2.2,  3.0)
				(-1.2,  1.0)
				( 0.0, -0.2)
				( 1.2,  1.0)
				( 0.2,  3.0)
			};
			\draw [thick] plot [smooth cycle] coordinates {
				(2.2,  3.2) 
				( -0.2,  2.2)
				(-1.3,  1.0)
				( 0.0, -0.3)
				( 1.4,  1.0)
			};
		\end{tikzpicture}		
	\end{center}
	\caption{Examples of narrow-tipped (left) and diamond-tipped (right) posets.}
	\label{fig-example-diamond-tipped-poset}
\end{figure}

\begin{fact}\label{fac-nd}
Let $P$ be a poset with M\"obius function $\mu_P$, and let $[x,y]$ be a finite interval in~$P$. If 
$[x,y]$ is narrow-tipped or diamond-tipped, then $\mobxfn{P}{x}{y}=0$.
\end{fact}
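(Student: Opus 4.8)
The plan is to reduce both cases to the single defining identity of the M\"obius function: for any $u$ with $x<u$ in a finite poset one has $\sum_{\tau\in[x,u]}\mobxfn{P}{x}{\tau}=0$ (this is the identity recalled in the excerpt for the permutation poset, and it holds verbatim in an arbitrary finite poset). Equivalently, $\mobxfn{P}{x}{y}=-\sum_{\tau\in[x,y)}\mobxfn{P}{x}{\tau}$. The whole argument is then just a matter of rewriting the half-open interval $[x,y)$ as a union of closed intervals whose tops lie strictly above $x$, so that the vanishing identity can be applied termwise.

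For the narrow-tipped case the reduction is immediate. By definition there is a core $z\neq x$ with $[x,y)=[x,z]$, so in particular $x<z$. Substituting into the recursive formula gives $\mobxfn{P}{x}{y}=-\sum_{\tau\in[x,z]}\mobxfn{P}{x}{\tau}$, and the sum on the right vanishes precisely because $x<z$. Hence $\mobxfn{P}{x}{y}=0$.

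For the diamond-tipped case I would apply inclusion--exclusion to the decomposition $[x,y)=[x,z]\cup[x,z']$. Since $[x,z]\cap[x,z']=[x,w]$, for any function $f$ on the poset we have $\sum_{\tau\in[x,y)}f(\tau)=\sum_{\tau\in[x,z]}f(\tau)+\sum_{\tau\in[x,z']}f(\tau)-\sum_{\tau\in[x,w]}f(\tau)$. Taking $f(\tau)=\mobxfn{P}{x}{\tau}$ and using that $z$, $z'$ and $w$ are all different from $x$ (so that $x<z$, $x<z'$ and $x<w$), each of the three closed-interval sums vanishes by the defining identity. Therefore $\sum_{\tau\in[x,y)}\mobxfn{P}{x}{\tau}=0$, and the recursive formula again yields $\mobxfn{P}{x}{y}=0$.

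There is essentially no hard step here; the only point requiring care is that the vanishing identity $\sum_{\tau\in[x,u]}\mobxfn{P}{x}{\tau}=0$ needs the strict inequality $x<u$. This is exactly why the definitions of both tip types insist that the core elements be distinct from $x$: were one of $z$, $z'$, $w$ allowed to equal $x$, the corresponding sum would reduce to $\mobxfn{P}{x}{x}=1$ rather than $0$ and the cancellation would break. So the sole place where the hypotheses are genuinely used is in verifying these strict inequalities.
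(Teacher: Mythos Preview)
Your proof is correct and follows essentially the same approach as the paper: both arguments use the recursive identity $\mobxfn{P}{x}{y}=-\sum_{\tau\in[x,y)}\mobxfn{P}{x}{\tau}$, replace $[x,y)$ by $[x,z]$ in the narrow-tipped case and by $[x,z]\cup[x,z']$ with inclusion--exclusion in the diamond-tipped case, and then apply the vanishing of $\sum_{\tau\in[x,u]}\mobxfn{P}{x}{\tau}$ for $x<u$. Your additional remark about why the core elements must be distinct from $x$ is a nice clarification not spelled out in the paper.
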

\begin{proof}
If $[x,y]$ is narrow-tipped with core $z$, then 
\begin{align*}
\mobxfn{P}{x}{y} = -\sum_{v\in[x,y)} \mobxfn{P}{x}{v} = -\sum_{v\in[x,z]} \mobxfn{P}{x}{v} = 0.
\end{align*}
If $[x,y]$ is diamond-tipped with core $(z,z',w)$ then
\begin{align*}
\mobxfn{P}{x}{y} & =-\sum_{v\in[x,y)} \mobxfn{P}{x}{v} \\
&=-\sum_{v\in[x,z]\cup[x,z']} \mobxfn{P}{x}{v} \\
&=-\sum_{v\in[x,z]} \mobxfn{P}{x}{v} - \sum_{v\in[x,z']} \mobxfn{P}{x}{v} + 
\sum_{v\in[x,z]\cap[x,z']} \mobxfn{P}{x}{v} \\
&=-\sum_{v\in[x,z]} \mobxfn{P}{x}{v} - \sum_{v\in[x,z']} \mobxfn{P}{x}{v} + 
\sum_{v\in[x,w]} \mobxfn{P}{x}{v} \\
&=0.\qedhere
\end{align*}
\end{proof}

\subsection{Embeddings}
\label{subsect-embeddings}

An \emph{embedding} of a permutation $\sigma\in\cS_k$ into a permutation $\pi\in\cS_n$ is a 
function $f\colon [k]\to[n]$ with the following properties:
\begin{itemize}
 \item $1\le f(1)<f(2)<\dotsb<f(k)\le n$.
 \item For any $i,j\in[k]$, we have $\sigma_i<\sigma_j$ if and only if $\pi_{f(i)}<\pi_{f(j)}$.
\end{itemize}

We let $\sE(\sigma,\pi)$ denote the set of embeddings of $\sigma$ into $\pi$, and $E(\sigma,\pi)$ 
denote the cardinality of $\sE(\sigma,\pi)$.

For an embedding $f$ of $\sigma$ into $\pi$, the \emph{image} of $f$, denoted $\Img(f)$, is the set 
$\{f(i);\;i\in[k]\}$. In particular, $|\Img(f)|=|\sigma|$. The permutation $\sigma$ is the 
\emph{source} of the embedding $f$, denoted $\src_\pi(f)$. When $\pi$ is clear from the context (as 
it usually will be) we write $\src(f)$ instead of $\src_\pi(f)$. Note that for a fixed $\pi$, the 
set $\Img(f)$ determines both $f$ and $\src_\pi(f)$ uniquely. 

We say that an embedding $f$ is \emph{even} if the cardinality of $\Img(f)$ is even, otherwise 
$f$ is \emph{odd}. In our arguments, we will frequently consider \emph{sign-reversing} mappings on 
sets of embeddings (with different sources), which are mappings that map an odd embedding to an even one and vice versa. A 
typical example of a sign-reversing mapping is the so-called $i$-switch, which we now define. For a 
permutation $\pi\in\cS_n$, let $\sE(*,\pi)$ be the set $\bigcup_{\sigma\le\pi}\sE(\sigma,\pi)$. For 
an index $i\in[n]$, the \emph{$i$-switch} of an embedding $f\in\sE(*,\pi)$, denoted $\Delta_i(f)$, 
is the embedding $g\in\sE(*,\pi)$ uniquely determined by the following properties:
\begin{align*}
  \Img(g)&= \Img(f)\cup\{i\} \text{ if } i\not\in\Img(f)\text{, and}\\
  \Img(g)&= \Img(f)\setminus\{i\} \text{ if } i\in\Img(f).
\end{align*} 

For example, consider the permutations $\sigma=132$ and $\pi=41253$, and the embedding 
$f\in\sE(\sigma,\pi)$ satisfying $f(1)=2$, $f(2)=4$, and $f(3)=5$. We then have $\Img(f)=\{2,4,5\}$. 
Defining $g=\Delta_3(f)$, we see that $\Img(g)=\{2,3,4,5\}$, and $\src(g)$ is the permutation 
$1243$. Similarly, for $h=\Delta_5(g)$, we have $\Img(h)=\{2,3,4\}$ and $\src(h)=123$.

Note that for any $\pi\in\cS_n$ and any $i\in[n]$, the function $\Delta_i$ is a sign-reversing 
involution on the set $\sE(*,\pi)$.

Consider, for a given $\pi\in\cS_n$, two embeddings $f,g\in\sE(*,\pi)$. We say that $f$ 
\emph{is contained in} $g$ if $\Img(f)\subseteq \Img(g)$. Note that if $f$ is contained in $g$, 
then the permutation $\src(f)$ is contained in~$\src(g)$, and if a permutation $\lambda$ is 
contained in a permutation $\tau$, then any embedding from $\sE(\tau,\pi)$ contains at least one 
embedding from $\sE(\lambda,\pi)$. In particular, the mapping $f\mapsto \src(f)$ is a poset 
homomorphism from the set $\sE(*,\pi)$ ordered by containment onto the interval $[\emptyperm,\pi]$ 
in the permutation pattern poset.

\subsection{\mob function via normal embeddings}\label{sec-form}
We will now derive a general formula which will become useful in several subsequent arguments. The 
formula can be seen as a direct consequence of the well-known \mob inversion formula.
The following form of the \mob inversion formula can be deduced, for example, from Proposition 3.7.2
in Stanley's book~\cite{Stanley2012}. A poset is \emph{locally finite} if each of its intervals is finite.

\begin{fact}[M\"obius inversion formula]\label{fac-mif} Let $P$ be a locally finite poset with 
maximum element $y$, let $\mu$ be the M\"obius function of $P$, and let $F\colon 
P\to\bbR$ be a function. If a function $G\colon P\to\bbR$ is defined by
\[
G(x)=\sum_{z\in[x,y]} F(z),
\]
then for every $x\in P$, we have
\[
F(x)=\sum_{z\in[x,y]} \mobfn{x}{z}G(z).
\]
\end{fact}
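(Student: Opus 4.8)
The plan is to verify the claimed identity directly: substitute the definition of $G$ into the proposed expression for $F$, collapse the resulting double sum, and finish using the defining recursion of the \mob function. The one ingredient I need beyond the definitions is the standard \emph{delta identity}: for any $x\le w$ in $P$,
\[
\sum_{z\in[x,w]} \mobfn{x}{z} =
\begin{cases} 1 & \text{if } x=w,\\ 0 & \text{if } x<w.\end{cases}
\]
The case $x=w$ is immediate, since the sum then reduces to $\mobfn{x}{x}=1$, while the case $x<w$ is exactly the identity $\sum_{\tau\in[x,w]}\mobfn{x}{\tau}=0$ already recorded in the introduction as a consequence of the recursive definition of~$\mu$.

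Granting this, I would begin from the right-hand side and expand $G$ via its defining sum:
\[
\sum_{z\in[x,y]} \mobfn{x}{z}\,G(z) = \sum_{z\in[x,y]} \mobfn{x}{z}\sum_{w\in[z,y]} F(w).
\]
The pairs $(z,w)$ occurring here are precisely those with $x\le z\le w\le y$, so regrouping the double sum according to the outer index $w$ yields
\[
\sum_{z\in[x,y]} \mobfn{x}{z}\,G(z) = \sum_{w\in[x,y]} F(w)\sum_{z\in[x,w]} \mobfn{x}{z}.
\]
By the delta identity, the inner sum $\sum_{z\in[x,w]}\mobfn{x}{z}$ equals $1$ when $w=x$ and $0$ otherwise. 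Hence every term with $w\ne x$ vanishes and the entire expression collapses to $F(x)$, which is the assertion.

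There is no genuine obstacle here; this is the classical argument, and the only point that needs a word of care is the interchange of summation used in the regrouping step. This interchange is justified entirely by the hypotheses: because $P$ is locally finite and $y$ is its maximum element, the interval $[x,y]$ is finite, so every sum appearing above ranges over a finite set and the reordering is valid. Conceptually, the whole proof is just the statement that the delta identity makes the zeta-type summation $F\mapsto G$ and the \mob summation mutually inverse, and the computation above is nothing more than composing the two and reading off the result.
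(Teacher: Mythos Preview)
Your argument is correct and is exactly the standard proof of M\"obius inversion. The paper does not actually give its own proof of this fact; it simply cites Proposition~3.7.2 of Stanley's book, so there is nothing further to compare.
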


As a consequence, we obtain the following result.

\begin{proposition}\label{pro-form}
Let $\sigma$ and $\pi$ be arbitrary permutations, and let $F\colon [\sigma,\pi]\to\bbR$ be a 
function satisfying $F(\pi)=1$. We then have
\begin{equation}
 \mobfn{\sigma}{\pi}= F(\sigma) - \sum_{\lambda\in 
[\sigma,\pi)} 
\mobfn{\sigma}{\lambda}\sum_{\tau\in[\lambda,\pi]} F(\tau).
\label{eq-form}
\end{equation}
\end{proposition}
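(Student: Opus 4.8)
The plan is to derive the claimed identity directly from the Möbius inversion formula (Fact~\ref{fac-mif}), applied to the interval $[\sigma,\pi]$ viewed as a poset in its own right. The key observation is that equation~\eqref{eq-form} is essentially Möbius inversion rewritten so that the term $\mobfn{\sigma}{\pi}$ is isolated on one side. I will set up the two functions $F$ and $G$ that Fact~\ref{fac-mif} requires, apply the formula at $x=\sigma$, and then rearrange.

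\begin{proof}[Proof proposal]
First I would apply Fact~\ref{fac-mif} to the locally finite poset $P=[\sigma,\pi]$, whose maximum element is $\pi$, using the given function $F$ restricted to $P$. Define $G\colon[\sigma,\pi]\to\bbR$ by
\[
G(\lambda)=\sum_{\tau\in[\lambda,\pi]} F(\tau).
\]
Then the Möbius inversion formula, evaluated at the minimum element $\sigma$, yields
\[
F(\sigma)=\sum_{\lambda\in[\sigma,\pi]} \mobfn{\sigma}{\lambda}\,G(\lambda).
\]
The next step is to split this sum over $[\sigma,\pi]$ into the single top term $\lambda=\pi$ and the remaining terms over the half-open interval $[\sigma,\pi)$. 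For the top term, I use the hypothesis $F(\pi)=1$, so $G(\pi)=\sum_{\tau\in[\pi,\pi]}F(\tau)=F(\pi)=1$, and $\mobfn{\sigma}{\pi}\,G(\pi)=\mobfn{\sigma}{\pi}$. Isolating this term gives
\[
F(\sigma)=\mobfn{\sigma}{\pi}+\sum_{\lambda\in[\sigma,\pi)}\mobfn{\sigma}{\lambda}\,G(\lambda),
\]
and rearranging to solve for $\mobfn{\sigma}{\pi}$, then substituting the definition of $G(\lambda)$ back in, produces exactly~\eqref{eq-form}.

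The only point requiring a little care is the edge case where $\sigma=\pi$, so that $[\sigma,\pi)$ is empty; there the inner sum vanishes, $F(\sigma)=F(\pi)=1$, and the formula correctly reduces to $\mobfn{\sigma}{\pi}=1$, consistent with the base case of the Möbius function. Apart from this sanity check, there is no genuine obstacle: the argument is a purely formal manipulation of a known inversion identity, and the main thing to verify is that the bookkeeping in splitting off the $\lambda=\pi$ term is done correctly and that the hypothesis $F(\pi)=1$ is used exactly once, to evaluate the top term. I expect the entire proof to be short.
\end{proof}
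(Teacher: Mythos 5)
Your proposal is correct and follows essentially the same route as the paper: define $G(\lambda)=\sum_{\tau\in[\lambda,\pi]}F(\tau)$, apply Fact~\ref{fac-mif} to the poset $P=[\sigma,\pi]$ at the bottom element $\sigma$, split off the $\lambda=\pi$ term using $F(\pi)=1$, and rearrange. The only difference is cosmetic (you evaluate $G(\pi)=1$ explicitly and add a sanity check for $\sigma=\pi$, which the paper omits).
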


\begin{proof}
Fix $\sigma$, $\pi$ and $F$. For $\lambda\in[\sigma,\pi]$, define
$G(\lambda)=\sum_{\tau\in[\lambda,\pi]} 
F(\tau)$. Using Fact~\ref{fac-mif} for the poset $P=[\sigma,\pi]$, we obtain
\[
F(\sigma)=\sum_{\lambda\in[\sigma,\pi]} \mobfn{\sigma}{\lambda}G(\lambda).
\]
Substituting the definition of $G(\lambda)$ into the above identity and 
noting that $F(\pi)=1$, we get
\begin{align*}
F(\sigma)&=\sum_{\lambda\in[\sigma,\pi]}\mobfn{\sigma}{\lambda}
\sum_{\tau\in[\lambda,\pi]} F(\tau)\\
&=\mobfn{\sigma}{\pi} + \sum_{\lambda\in[\sigma,\pi)}\mobfn{\sigma}{\lambda}
 \sum_{\tau\in[\lambda,\pi]} F(\tau), 
\end{align*}
from which the proposition follows.
\end{proof}

In our applications, the function $F(\tau)$ will usually be defined in terms of the number of 
embeddings of $\tau$ into $\pi$ satisfying certain additional conditions.
In the literature, there are several definitions of such restricted embeddings, which are usually 
referred to as \emph{normal embeddings}. 

The notion of normal embedding seems to originate from the work of Bj\"orner~\cite{BjornerSubword}, 
who defined normal embeddings between words, and showed that in the subword order of 
words over a finite alphabet, the M\"obius function of any interval $[x,y]$ is equal in absolute 
value to the number of normal embeddings of $x$ into~$y$. 

Bj\"orner's approach was later extended to the computation of the M\"obius 
function in the composition poset~\cite{Sagan2006}, the poset of separable 
permutations~\cite{Burstein2011}, or the poset of permutations with a fixed number of 
descents~\cite{Smith2016}. In all these cases, the authors define a notion 
of ``normal'' embeddings tailored for their poset, and then express the M\"obius 
function of an interval $[x,y]$ as the sum of weights of the ``normal'' embeddings 
of $x$ into $y$, where each normal embedding has weight $1$ or~$-1$.

For general permutations, this simple approach fails, since the M\"obius 
function $\mobfn{\sigma}{\pi}$ is sometimes larger than the number of all 
embeddings of $\sigma$ into~$\pi$. However, Smith~\cite{Smith2016a} 
introduced a notion of normal embedding applicable to arbitrary permutations, 
and proved a formula expressing $\mobfn{\sigma}{\pi}$ as a summation over certain 
sets of normal embeddings. 

For consistency, we adopt the term ``normal embedding'' in this paper, although in 
our proofs, we will need to introduce several notions of normality, which are different from each 
other and from the notions of normality introduced by previous authors. 
We will always use $\sNE(\tau,\pi)$ to denote the set of embeddings of $\tau$ into $\pi$ satisfying 
the definition of normality used in the given context, 
and we let $\NE(\tau,\pi)$ be the cardinality of $\sNE(\tau,\pi)$.

The next proposition provides a general basis for all our subsequent applications of normal 
embeddings.

\begin{proposition}\label{pro-normal}
Let $\sigma$ and $\pi$ be permutations. Suppose that for each $\tau\in[\sigma,\pi]$ we fix a 
subset $\sNE(\tau,\pi)$ of $\sE(\tau,\pi)$, with the elements of $\sNE(\tau,\pi)$ being referred to 
as \emph{normal embeddings} of $\tau$ into~$\pi$. Assume that $\sNE(\pi,\pi)=\sE(\pi,\pi)$, that is, 
the unique embedding of $\pi$ into $\pi$ is normal. For each
$\lambda\in[\sigma,\pi)$, define the two sets of 
embeddings
\begin{align*}
\sNElo&=\bigcup_{\substack{\tau\in[\lambda,\pi]\\ |\tau| 
\text{ odd}}}\sNE(\tau,\pi)\quad\text{and}\\
\sNEle&=\bigcup_{\substack{\tau\in[\lambda,\pi]\\ |\tau| 
\text{ even}}}\sNE(\tau,\pi).
\end{align*}
If for every $\lambda\in[\sigma,\pi)$ such that $\mobfn{\sigma}{\lambda}\neq 0$, we have the 
identity
\begin{equation}
 \left|\sNElo\right|= \left|\sNEle\right|,
\label{eq-cancel}
\end{equation}
then $\mobfn{\sigma}{\pi} = (-1)^{|\pi|-|\sigma|}\NE(\sigma,\pi)$.
\end{proposition}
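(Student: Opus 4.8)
The plan is to apply Proposition~\ref{pro-form} with a carefully chosen $F$ built from normal embeddings, and then show that the hypothesis~\eqref{eq-cancel} forces all the correction terms to vanish. Concretely, I would define $F\colon[\sigma,\pi]\to\bbR$ by $F(\tau)=(-1)^{|\pi|-|\tau|}\NE(\tau,\pi)$. The assumption $\sNE(\pi,\pi)=\sE(\pi,\pi)$ gives $\NE(\pi,\pi)=1$, so $F(\pi)=1$ and $F$ is admissible for Proposition~\ref{pro-form}. With this choice, the target identity $\mobfn{\sigma}{\pi}=(-1)^{|\pi|-|\sigma|}\NE(\sigma,\pi)$ is exactly the statement that $\mobfn{\sigma}{\pi}=F(\sigma)$, so by~\eqref{eq-form} it suffices to show that the double sum
\begin{equation*}
\sum_{\lambda\in[\sigma,\pi)}\mobfn{\sigma}{\lambda}\sum_{\tau\in[\lambda,\pi]}F(\tau)
\end{equation*}
is zero.

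First I would rewrite the inner sum. For a fixed $\lambda\in[\sigma,\pi)$,
\begin{equation*}
\sum_{\tau\in[\lambda,\pi]}F(\tau)=\sum_{\tau\in[\lambda,\pi]}(-1)^{|\pi|-|\tau|}\NE(\tau,\pi)
=(-1)^{|\pi|}\Bigl(\sum_{\substack{\tau\in[\lambda,\pi]\\ |\tau|\text{ even}}}\NE(\tau,\pi)-\sum_{\substack{\tau\in[\lambda,\pi]\\ |\tau|\text{ odd}}}\NE(\tau,\pi)\Bigr).
\end{equation*}
By the definitions of $\sNEle$ and $\sNElo$, the two sums in parentheses are precisely $\bigl|\sNEle\bigr|$ and $\bigl|\sNElo\bigr|$, so the inner sum equals $(-1)^{|\pi|}\bigl(\bigl|\sNEle\bigr|-\bigl|\sNElo\bigr|\bigr)$. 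This is where the hypothesis enters: the outer sum ranges only over $\lambda$ with $\mobfn{\sigma}{\lambda}$ appearing as a coefficient, and for any such $\lambda$ with $\mobfn{\sigma}{\lambda}\neq0$, identity~\eqref{eq-cancel} gives $\bigl|\sNElo\bigr|=\bigl|\sNEle\bigr|$, hence the inner sum vanishes.

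Next I would assemble the cancellation. Each term of the outer sum is $\mobfn{\sigma}{\lambda}$ times an inner sum; whenever the coefficient $\mobfn{\sigma}{\lambda}$ is nonzero the inner sum is zero by the previous step, and whenever $\mobfn{\sigma}{\lambda}=0$ the whole term is zero regardless of the inner sum. Thus every summand is zero and the entire double sum vanishes. Feeding this back into~\eqref{eq-form} yields $\mobfn{\sigma}{\pi}=F(\sigma)=(-1)^{|\pi|-|\sigma|}\NE(\sigma,\pi)$, which is the claim.

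The only genuinely delicate point is the bookkeeping in the second paragraph: one must make sure the sets $\sNEle$ and $\sNElo$ are disjoint unions over $\tau$ (which holds because $\sNE(\tau,\pi)\subseteq\sE(\tau,\pi)$ and embeddings with distinct sources have distinct images, so an embedding belongs to at most one $\sNE(\tau,\pi)$), so that their cardinalities are genuinely the sums of the $\NE(\tau,\pi)$ rather than an over- or undercount. Everything else is a direct substitution, and I do not expect any real obstacle; the strength of the proposition is front-loaded into the hypothesis~\eqref{eq-cancel}, whose verification is deferred to each later application.
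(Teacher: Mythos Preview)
Your proof is correct and follows essentially the same approach as the paper's: define $F(\tau)=(-1)^{|\pi|-|\tau|}\NE(\tau,\pi)$, apply Proposition~\ref{pro-form}, and use hypothesis~\eqref{eq-cancel} to kill each term of the outer sum. Your explicit remark that the unions defining $\sNEle$ and $\sNElo$ are disjoint (since distinct sources yield distinct embeddings) is a helpful clarification that the paper leaves implicit.
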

\begin{proof}
The trick is to define the function $F(\tau)=(-1)^{|\pi|-|\tau|}\NE(\tau,\pi)$ and apply 
Proposition~\ref{pro-form}. This yields
\begin{align*}
\mobfn{\sigma}{\pi}&= F(\sigma) - \sum_{\lambda\in 
[\sigma,\pi)} \mobfn{\sigma}{\lambda}\sum_{\tau\in[\lambda,\pi]} F(\tau)\\
&=F(\sigma) - \sum_{\lambda\in 
[\sigma,\pi)} \mobfn{\sigma}{\lambda}\sum_{\tau\in[\lambda,\pi]} (-1)^{|\pi|-|\tau|}\NE(\tau,\pi)\\
&=F(\sigma) -\sum_{\lambda\in 
[\sigma,\pi)} \mobfn{\sigma}{\lambda}(-1)^{|\pi|}\bigl( 
\left|\sNEle\right|-\left|\sNElo\right|\bigr)\\
&=F(\sigma)\\
&=(-1)^{|\pi|-|\sigma|}\NE(\sigma,\pi),
\end{align*}
as claimed.
\end{proof}

We remark that the general formula of Proposition~\ref{pro-form} can be useful even in situations 
where the more restrictive assumptions of Proposition~\ref{pro-normal} fail. An example of such 
application of Proposition~\ref{pro-form} will appear in an upcoming manuscript~\cite{Highs}, which 
is being prepared in parallel to this publication.

\section{Permutations with opposing adjacencies}\label{sec-opposing}

In this section, we show that if a permutation has opposing adjacencies, then the value of the 
principal \mob function is zero. 
\begin{theorem}
    \label{theorem-PMF-opposing-adjacencies}
    If $\pi$ has opposing adjacencies, then $\mobp{\pi} = 0$.
\end{theorem}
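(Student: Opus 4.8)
The plan is to prove the theorem through the normal-embedding framework of Proposition~\ref{pro-normal}, applied with $\sigma=1$. The goal is to exhibit a notion of normality for which $\NE(1,\pi)=0$, so that the conclusion of the proposition becomes $\mobp{\pi}=(-1)^{|\pi|-1}\NE(1,\pi)=0$. Fix positions $p,p+1$ carrying an up-adjacency and positions $q,q+1$ carrying a down-adjacency; these two blocks must be disjoint, since an ascending and a descending adjacency cannot share a position. Any notion of normality that forces the image of a normal embedding to meet each block in a prescribed two-point way will automatically give $\NE(1,\pi)=0$, because a single point cannot realize a required two-point configuration, while the full embedding in $\sE(\pi,\pi)$ remains normal as the hypothesis of Proposition~\ref{pro-normal} demands.

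The heart of the argument is the cancellation identity $\order{\sNElo}=\order{\sNEle}$ for every $\lambda\in[1,\pi)$ with $\mobp{\lambda}\neq 0$. I would establish this by constructing, for each such $\lambda$, a sign-reversing involution on the normal embeddings whose source lies in $[\lambda,\pi]$, built from the switches $\Delta_p,\Delta_{p+1},\Delta_q,\Delta_{q+1}$ at the four distinguished positions. The difficulty is that an arbitrary $i$-switch alters the source and may push it outside $[\lambda,\pi]$. To control this, I would first fix for every relevant embedding $f$ a canonical copy of $\lambda$ inside $\src(f)$ — say the one with lexicographically least image — and only ever toggle a distinguished position that is \emph{free}, i.e.\ not used by this canonical copy. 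Toggling a free position leaves the canonical copy intact, so the source stays in $[\lambda,\pi]$, while $\order{\Img(f)}$ changes by one and the parity of the source is reversed.

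The main obstacle, and the place where the opposing orientation is genuinely used, is to choose normality and the rule selecting the distinguished free position so that the resulting map is well defined, involutive (the chosen position must itself be invariant under the toggle), and fixed-point-free. A single adjacency does not suffice: for instance $12$ has an up-adjacency yet $\mobp{12}=-1\neq 0$, so the up-adjacency alone cannot force the normal count to vanish, and indeed the obvious guesses already fail. Declaring $f$ normal exactly when its image contains the up-adjacency breaks the cancellation identity at $\lambda=21$, whereas declaring it normal when the image contains the up- \emph{or} the down-adjacency breaks it globally at $\lambda=1$; both failures are immediate on $\pi=1243$. The correct definition must let the up-adjacency $\{p,p+1\}$ and the down-adjacency $\{q,q+1\}$ play complementary roles, so that whenever the canonical copy of $\lambda$ blocks a toggle at one block, a free toggle survives at the other. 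It is precisely the coexistence of one ascending and one descending block that guarantees such a free position for every $\lambda$ with $\mobp{\lambda}\neq 0$, and verifying fixed-point-freeness across all cases is where the real work lies.

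Once the involution is shown to be sign-reversing and without fixed points on the $\lambda$-restricted normal embeddings for each relevant $\lambda$, Proposition~\ref{pro-normal} applies directly and yields $\mobp{\pi}=(-1)^{|\pi|-1}\NE(1,\pi)=0$. As a sanity check, on the smallest instance $\pi=1243$ the intervals below $\pi$ have principal \mob values $\mobp{1}=1$, $\mobp{12}=-1$, $\mobp{21}=-1$, $\mobp{123}=0$, $\mobp{132}=1$, so that $\mobp{1243}=-(1-1-1+0+1)=0$, and a correct normality must reproduce this as a vanishing signed count with $\NE(1,\pi)=0$.
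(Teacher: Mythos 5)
You have chosen the same route as the paper's second proof---Proposition~\ref{pro-normal} with $\sigma=1$, normality tied to the two adjacencies, and sign-reversing switches $\Delta_k$---but your proposal stops exactly where the proof has to happen: you never state a working definition of normality, and you never construct the involution, explicitly deferring ``fixed-point-freeness across all cases'' as ``where the real work lies.'' The missing idea that makes everything go through is to argue on a \emph{minimal counterexample} (equivalently, by induction on $|\pi|$). If $\pi$ is a shortest permutation with opposing adjacencies and $\mobp{\pi}\neq 0$, then every $\lambda\in[1,\pi)$ with $\mobp{\lambda}\neq 0$ has no opposing adjacencies, so, up to symmetry, $\lambda$ has no up-adjacency at all. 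This lets the switch be chosen as a function of $\lambda$ alone, not of the individual embedding: with normality defined by $\Img(f)\supseteq[n]\setminus\{p,q\}$ (every position present except possibly the \emph{first} position of each adjacency), the single switch $\Delta_p$ is a sign-reversing involution on all of $\sNEl$. Indeed, any embedding $h$ of $\lambda$ contained in $f$ uses at most one of $p,p+1$ (if it used both, the corresponding elements of $\lambda$ would be consecutive in position and in value, giving $\lambda$ an up-adjacency), so if $p\in\Img(h)$ one slides that element to $p+1$---a position contained in every normal image---and obtains a copy of $\lambda$ inside $\src(\Delta_p(f))$. Involutivity is automatic because $\Delta_p$ itself is an involution; no per-embedding choices are made. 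Finally $\NE(1,\pi)=0$ because the two adjacencies are disjoint, so $n\ge4$ and every normal image has size at least $n-2\ge 2$, whence $\mobfn{1}{\pi}=(-1)^{n-1}\NE(1,\pi)=0$, the desired contradiction.

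Your substitute for this step---fixing a canonical (lexicographically least) copy of $\lambda$ inside $\src(f)$ and toggling only a distinguished position that copy avoids---does not close the gap: after the toggle, the lexicographically least copy of $\lambda$ in the new source can occupy a different set of positions, so the same rule applied to the image embedding may select a different position to toggle, and the map fails to be an involution. This is precisely the well-definedness problem you flag, and without the minimality hypothesis there is no structural information about $\lambda$ to break it with: knowing only $\mobp{\lambda}\neq0$, one cannot control where the copies of $\lambda$ sit relative to the four distinguished positions. Your negative examples (normality as ``image contains the up-adjacency'' failing at $\lambda=21$, and as ``image contains one of the two adjacencies'' failing at $\lambda=1$) are correct, and your check $\mobp{1243}=0$ is right; but they only demonstrate the need for the right definition---they do not supply it, and the definition plus the minimal-counterexample reduction is the entire content of the proof.
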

For this theorem, we are able to give two proofs. One of them is based on the notion of 
diamond-tipped intervals, and the other uses the approach of normal embeddings. As both these 
approaches will later be adapted to more complicated settings, we find it instructive to include 
both proofs here.

\begin{proof}[Proof via diamond-tipped posets] For contradiction, suppose that the theorem fails, 
and let $\pi$ be a shortest permutation with opposing adjacencies such that $\mobp{\pi}\neq0$. Since 
$\pi$ has opposing adjacencies, there is a permutation $\tau$ and indices $i,j\le|\tau|$ such that 
$\pi=\tau_{i,j}[12,21]$. Define $\phi=\tau_{i,j}[1,21]$ and $\phi'=\tau_{i,j}[12,1]$. 

We claim that the interval $[1,\pi]$ can be transformed into a diamond-tipped interval with core 
$(\phi,\phi',\tau)$ by deleting a set of \mob zeros from the interior of $[1,\pi]$. Since by 
Fact~\ref{fac-del}, the deletion of \mob zeros does not affect the value of $\mobfn{1}{\pi}$, and 
since diamond-tipped intervals have zero \mob function by Fact~\ref{fac-nd}, this claim will imply 
that $\mobfn{1}{\pi}=0$, a contradiction.

To prove the claim, note first that any permutation $\lambda\in[1,\pi)$ with opposing adjacencies is 
a \mob zero, since $\pi$ is a minimal counterexample to the theorem. Choose any $\lambda\in[1,\pi)$. 
Observe that if $\lambda$ has no up-adjacency, then $\lambda\le\phi$, and symmetrically, if 
$\lambda$ has no down-adjacency, then $\lambda\le\phi'$. Thus, any $\lambda\in[1,\pi)$ not 
belonging to $[1,\phi]\cup[1,\phi']$ has opposing adjacencies and can be deleted from $[1,\pi]$. 

Next, suppose that a permutation $\lambda$ is in $[1,\phi]\cap[1,\phi']$ but not in $[1,\tau]$. 
Observe that any permutation in $[1,\phi]\setminus[1,\tau]$ has a down-adjacency, while any 
permutation in $[1,\phi']\setminus[1,\tau]$ has an up-adjacency. It follows that $\lambda$ 
has opposing adjacencies and can again be deleted from $[1,\pi]$.

After these deletions, the remaining poset is diamond-tipped with core $(\phi,\phi',\tau)$ as 
claimed, hence $\mobfn{1}{\pi}=0$, a contradiction.
\end{proof}

\begin{proof}[Proof via normal embeddings] Suppose again that $\pi\in\cS_n$ is a shortest 
counterexample. Suppose that $\pi$ has an up-adjacency at positions $i$, $i+1$, and a 
down-adjacency at positions $j$, $j+1$. Note that the positions $i$, $i+1$, $j$ and $j+1$ are 
all distinct, and in particular $n\ge 4$.

We will say that an embedding $f\in\sE(*,\pi)$ is \emph{normal} if $\Img(f)$ is a superset of 
$[n]\setminus\{i,j\}$. In other words, $\Img(f)$ contains all positions of $\pi$ with the possible 
exception of $i$ and~$j$. Thus, there are four normal embeddings. 

We will use Proposition~\ref{pro-normal} with the above notion of normal embeddings and with 
$\sigma=1$. Clearly, we have $\sE(\pi,\pi)=\sNE(\pi,\pi)$. The main task is to verify equation 
\eqref{eq-cancel}, that is, to show that for every $\lambda\in[1,\pi)$ such that $\mobp{\lambda}\neq0$ 
we have $|\sNElo|=|\sNEle|$. To prove this identity, we let $\sNEl$ denote the set 
$\sNElo\cup\sNEle$, and we will provide a sign-reversing involution on~$\sNEl$.

Choose a $\lambda\in[1,\pi)$ with $\mobp{\lambda}\neq0$. It follows that $\lambda$ does 
not have opposing adjacencies, otherwise it would be a counterexample shorter than~$\pi$. Without 
loss of generality, assume that $\lambda$ has no up-adjacency. We will show that the 
$i$-switch operation $\Delta_i$ is a sign-reversing involution on~$\sNEl$. 

It is clear that $\Delta_i$ is sign-reversing. We need to show that for every $f\in\sNEl$, the 
embedding $g=\Delta_i(f)$ is again in $\sNEl$. It is clear that $g$ is normal. It remains to argue 
that $\src(g)$ contains $\lambda$, or in other words, that there is an embedding of $\lambda$ into 
$\pi$ contained in~$g$. Let $h$ be a (not necessarily normal) embedding of $\lambda$ into 
$\pi$ contained in~$f$. If $i$ is not in $\Img(h)$, then $h$ is also contained in $g$, and 
we are done. Suppose now that $i\in\Img(h)$. Then $i+1\not\in\Img(h)$, because $i$ and $i+1$ form an 
up-adjacency in $\pi$ while $\lambda$ has no up-adjacency. We modify the embedding 
$h$ so that the element mapped to $i$ will be mapped to $i+1$ instead, and the mapping of the 
remaining elements is unchanged; let $h'$ be the resulting embedding (formally, we have 
$\Delta_i(\Delta_{i+1}(h))=h'$). Since $i$ and $i+1$ form an adjacency in $\pi$, we have 
$\src(h')=\src(h)=\lambda$. Since $i+1$ is in the image of all normal embeddings, we see that $h'$ 
is contained in $g$, and so $g\in\sNEl$. This shows that $\Delta_i$ is the required sign-reversing 
involution on $\sNEl$, verifying the assumptions of Proposition~\ref{pro-normal}.

Proposition~\ref{pro-normal} then shows that $\mobfn{1}{\pi}=(-1)^{n-1}\NE(1,\pi)$. Since every normal embedding into $\pi$ contains both $i+1$ and $j+1$ in its image, there is 
clearly no normal embedding of 1 into $\pi$ and therefore we get $\mobfn{1}{\pi}=0$.
\end{proof}

\section{\texorpdfstring{A general construction of $\sigma$-annihilators}{A general construction of 
sigma-annihilators}}
\label{sec-annihilator}

Let $\sigma$ be a fixed non-empty lower bound permutation (the case $\sigma=1$ being the most 
interesting). Recall that a permutation $\phi$ is a \emph{$\sigma$-zero} if $\mobfn{\sigma}{\phi}=0$, 
and $\phi$ is a \emph{$\sigma$-annihilator} if every permutation with an interval copy of
$\phi$ is a $\sigma$-zero. Clearly, any $\sigma$-annihilator is also a $\sigma$-zero. Our goal in 
this section is to present a general construction of an infinite family of $\sigma$-annihilators.

A permutation $\phi$ is \emph{$\sigma$-narrow} if $\phi$ contains a permutation $\phi^-$ of size 
$|\phi|-1$ such that every permutation in the set $[1,\phi)\setminus [1,\phi^-]$ is a  
$\sigma$-annihilator. In such situation, we call $\phi^-$ a \emph{$\sigma$-core of $\phi$}. 

Note that if $\phi$ is $\sigma$-narrow with $\sigma$-core $\phi^-$, then the interval $[1,\phi]$ can be 
transformed into a narrow-tipped interval by a deletion of $\sigma$-annihilators. Our first goal is 
to show that, with a few exceptions, all $\sigma$-narrow permutations are $\sigma$-annihilators. 

\begin{proposition}\label{pro-narrow}
If a permutation $\phi$ is $\sigma$-narrow with a $\sigma$-core $\phi^-$, and if $\sigma$ has no 
interval copy of $\phi$ or of $\phi^-$, then $\phi$ is a $\sigma$-annihilator.
\end{proposition}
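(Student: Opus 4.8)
The goal is to show that $\phi$ is a $\sigma$-annihilator, meaning that every permutation $\pi$ with an interval copy of $\phi$ satisfies $\mobfn{\sigma}{\pi}=0$. The plan is to induct on the length of $\pi$, treating $\phi$ as fixed, and to reduce the interval $[\sigma,\pi]$ to a narrow-tipped interval by deleting $\sigma$-zeros from its interior, so that Fact~\ref{fac-del} and Fact~\ref{fac-nd} together force $\mobfn{\sigma}{\pi}=0$. Concretely, write $\pi=\tau_k[\phi]$ for some $\tau$ and some position $k$, so that replacing the inflated copy of $\phi$ by a copy of $\phi^-$ yields a natural candidate $\pi^-=\tau_k[\phi^-]$ for the core of a narrow-tipped interval. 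I would aim to prove that, after deleting suitable $\sigma$-zeros, the half-open interval $[\sigma,\pi)$ collapses to $[\sigma,\pi^-]$.

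\textbf{Key steps.}
First I would set up the induction: assume the statement holds for all shorter permutations, so that in particular every $\sigma$-annihilator strictly below $\pi$ in length is already known to be a $\sigma$-zero, and every permutation in $[\sigma,\pi)$ that contains an interval copy of $\phi$ (or of any $\sigma$-annihilator) is a $\sigma$-zero by the inductive hypothesis. Second, I would analyze an arbitrary $\lambda\in[\sigma,\pi)$ and ask whether it embeds into $\pi^-$. The crucial dichotomy is: either $\lambda\le\pi^-$, or else every embedding of $\lambda$ into $\pi$ must use ``enough'' of the inflated $\phi$-interval that $\lambda$ itself contains an interval copy of some permutation in $[1,\phi)\setminus[1,\phi^-]$ — and each such permutation is a $\sigma$-annihilator by the definition of $\sigma$-narrowness, making $\lambda$ a $\sigma$-zero. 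Here is where the hypothesis that $\sigma$ has no interval copy of $\phi$ or of $\phi^-$ is needed: it guarantees that $\lambda$ (which contains $\sigma$) cannot accidentally ``absorb'' the critical interval into the $\sigma$-part, so the interval copy genuinely arises inside the image of the inflated block. Third, having shown that every $\lambda\in[\sigma,\pi)$ is either below $\pi^-$ or a $\sigma$-zero, I would delete all such $\sigma$-zeros from the interior using Fact~\ref{fac-del}, leaving the narrow-tipped interval $[\sigma,\pi]$ with core $\pi^-$, and conclude $\mobfn{\sigma}{\pi}=0$ via Fact~\ref{fac-nd}.

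\textbf{Main obstacle.}
The hard part will be the second step: making precise the claim that a permutation $\lambda\le\pi$ which does \emph{not} embed into $\pi^-$ must contain an interval copy of some element of $[1,\phi)\setminus[1,\phi^-]$. The subtlety is that $\lambda$ embeds into $\pi=\tau_k[\phi]$ in many ways, and a copy of $\lambda$ need not respect the block structure — parts of $\lambda$ may land inside the inflated $\phi$-block and parts outside. I expect the argument to turn on choosing an embedding of $\lambda$ into $\pi$ and restricting attention to the portion of $\lambda$ whose image lands inside the inflated block: this portion is order-isomorphic to some $\psi\le\phi$, and it sits inside $\lambda$ as a genuine interval precisely because the block is an interval of $\pi$. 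If $\psi=\phi$ then $\lambda$ has an interval copy of $\phi$ and is a $\sigma$-zero by induction; if $\psi\le\phi^-$ then one should be able to rebuild an embedding of $\lambda$ into $\pi^-$, contradicting $\lambda\not\le\pi^-$; and the remaining case $\psi\in[1,\phi)\setminus[1,\phi^-]$ gives the desired $\sigma$-annihilator interval copy. Verifying that these cases are exhaustive and that the ``rebuild into $\pi^-$'' step is valid — using the no-interval-copy hypothesis on $\sigma$ to prevent the $\sigma$-part of $\lambda$ from interfering — is where the real care is required.
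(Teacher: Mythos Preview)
Your proposal is correct and follows essentially the same route as the paper: write $\pi=\tau_k[\phi]$, set $\pi^-=\tau_k[\phi^-]$, argue by induction that every $\lambda\in[\sigma,\pi)\setminus[\sigma,\pi^-]$ is a $\sigma$-zero via exactly the trichotomy you describe on the block-portion $\psi$, then delete and invoke Facts~\ref{fac-del} and~\ref{fac-nd}. The paper inducts on $|\tau|$ rather than $|\pi|$, but with $\phi$ fixed these are equivalent.

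One correction concerning what you flag as the main obstacle: the hypothesis that $\sigma$ has no interval copy of $\phi$ or $\phi^-$ plays \emph{no role} in the embedding analysis of $\lambda$ or in the ``rebuild into $\pi^-$'' step. Your trichotomy works unconditionally, because the preimage of the inflated block under any embedding of $\lambda$ into $\pi$ is automatically an interval of $\lambda$ (intervals pull back to intervals), and if that interval is isomorphic to some $\psi\le\phi^-$ you can simply re-embed it into the $\phi^-$-block of $\pi^-$ while leaving the rest of the embedding untouched. The hypothesis is used only at the very end, and for a much more mundane reason: it guarantees $\sigma\neq\pi$ and $\sigma\neq\pi^-$, so that after the deletions the surviving poset really is narrow-tipped with core $\pi^-$ strictly above~$\sigma$ (note that $\sigma$ itself is never a $\sigma$-zero, so it is never deleted, forcing $\sigma\le\pi^-$).
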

\begin{proof}
Let $\phi$ be $\sigma$-narrow with a $\sigma$-core~$\phi^-$. Let $\pi$ be a permutation with an interval 
copy of~$\phi$, that is, $\pi=\tau_i[\phi]$ for some $\tau$ and~$i$. We show that 
$\mobfn{\sigma}{\pi}=0$. We may assume that $\sigma\le\pi$, otherwise $\mobfn{\sigma}{\pi}=0$ trivially. 
Let $\pi^-$ be the permutation $\tau_i[\phi^-]$. Note that $\sigma\neq\pi$ and $\sigma\neq\pi^-$, 
since $\sigma$ has no interval copy of~$\phi$ or of~$\phi^-$. 

The key step of the proof is to show that any permutation in $[\sigma,\pi)\setminus [\sigma,\pi^-]$ 
is a $\sigma$-zero. After we show this, we may use Fact~\ref{fac-del} to remove all such 
$\sigma$-zeros from the interval $[\sigma,\pi]$ without affecting the value of $\mobfn{\sigma}{\pi}$; 
note that $\sigma$ itself is clearly not a $\sigma$-zero, so it will not be removed, implying that 
$\sigma<\pi^-$. After the removal of $[\sigma,\pi)\setminus [\sigma,\pi^-]$, the remainder of the 
interval $[\sigma,\pi]$ is a narrow-tipped poset with core $\pi^-$, yielding 
$\mobfn{\sigma}{\pi}=0$ by Fact~\ref{fac-nd}. 

Therefore, to prove that $\mobfn{\sigma}{\pi}=0$ for a particular $\pi=\tau_i[\phi]$, it is enough to 
show that all the permutations in $[\sigma,\pi)\setminus [\sigma,\pi^-]$ are $\sigma$-zeros. We 
prove this by induction on~$|\tau|$.

If $|\tau|=1$, we have $\pi=\phi$ and $\pi^-=\phi^-$. Then all the permutations in 
$[1,\pi)\setminus[1,\pi^-]$ are $\sigma$-annihilators (and therefore $\sigma$-zeros) by definition 
of $\sigma$-narrowness, and in particular, restricting our attention to permutations containing 
$\sigma$, we see that all the permutations in $[\sigma,\pi)\setminus[\sigma,\pi^-]$ are 
$\sigma$-zeros, as claimed.

Suppose that $|\tau|>1$. Consider a permutation $\gamma \in [\sigma,\pi)\setminus [\sigma,\pi^-]$. 
Since $\gamma$ is contained in $\pi=\tau_i[\phi]$, it can be expressed as 
$\gamma=\btau_j[\bphi]$ for some $\emptyperm\le\bphi\le \phi$ and $1\le\btau\le\tau$, where $\btau$ 
has an embedding into $\tau$ which maps $j$ to $i$. Note that $\bphi$ cannot be contained 
in~$\phi^-$, because in such case we would have $\gamma\le \pi^-$. Moreover, if $\bphi=\phi$, then 
necessarily $\btau<\tau$, and by induction $\gamma$ is a $\sigma$-zero. Finally, if $\bphi$ is in 
$[1,\phi)\setminus [1,\phi^-]$, then $\bphi$ is a $\sigma$-annihilator by the $\sigma$-narrowness of 
$\phi$, and hence $\gamma$ is a $\sigma$-zero.
\end{proof}

With the help of Proposition~\ref{pro-narrow}, we can now provide an explicit general construction 
of $\sigma$-annihilators.

\begin{proposition}\label{pro-sum}
Let $\alpha$ and $\beta$ be non-empty permutations. Assume that $\sigma$ does not contain any 
interval copy of a permutation of the form $\alpha'\oplus\beta'$ with $1\le\alpha'\le \alpha$ and 
$1\le\beta'\le\beta$ 
(in particular, $\sigma$ has no up-adjacency). Then $\alpha\oplus1\oplus\beta$ is 
$\sigma$-narrow with $\sigma$-core $\alpha\oplus\beta$, and $\alpha\oplus1\oplus\beta$ is a 
$\sigma$-annihilator. 
\end{proposition}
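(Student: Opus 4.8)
The plan is to apply Proposition~\ref{pro-narrow} to $\phi = \alpha\oplus1\oplus\beta$ with candidate $\sigma$-core $\phi^- = \alpha\oplus\beta$. Here $|\phi^-| = |\phi|-1$ and $\phi^-\le\phi$, so the size requirement of a $\sigma$-core holds. Before the main argument I would dispatch the two interval-copy hypotheses of Proposition~\ref{pro-narrow}. That $\sigma$ has no interval copy of $\phi^- = \alpha\oplus\beta$ is immediate, since $\alpha\oplus\beta$ is itself of the form $\alpha'\oplus\beta'$ with $\alpha'=\alpha$ and $\beta'=\beta$, a pattern the hypothesis forbids $\sigma$ from containing. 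For $\phi$ itself, I would observe that the middle element of $\phi$ together with its $\beta$-block forms an interval copy of $1\oplus\beta$ inside $\phi$, and $1\oplus\beta$ is again of the forbidden form (take $\alpha'=1\le\alpha$ and $\beta'=\beta$). Since the relation ``is an interval copy of'' is transitive, an interval copy of $\phi$ in $\sigma$ would yield a forbidden interval copy of $1\oplus\beta$; hence $\sigma$ has no interval copy of $\phi$ either.

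The substance is to show $\phi$ is $\sigma$-narrow with $\sigma$-core $\phi^-$, that is, that every $\psi\in[1,\phi)\setminus[1,\phi^-]$ is a $\sigma$-annihilator. I would argue by induction on $|\alpha|+|\beta|$; the base case $\alpha=\beta=1$ is vacuous, as $[1,123)\setminus[1,12]=\emptyset$. The crucial structural step is to pin down the difference set. Fixing an embedding of such a $\psi$ into the direct sum $\phi = \alpha\oplus1\oplus\beta$ and splitting its image according to the three blocks of $\phi$ writes $\psi = \psi_a\oplus\psi_m\oplus\psi_b$ with $\psi_a\le\alpha$, $\psi_m\in\{\emptyperm,1\}$ and $\psi_b\le\beta$. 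If $\psi_m=\emptyperm$ then $\psi=\psi_a\oplus\psi_b\le\alpha\oplus\beta=\phi^-$, contradicting $\psi\notin[1,\phi^-]$; so $\psi=\psi_a\oplus1\oplus\psi_b$. Likewise, if $\psi_a$ were empty then, using that $\alpha$ is non-empty (so $1\le\alpha$), we would have $\psi=1\oplus\psi_b\le\alpha\oplus\beta=\phi^-$, again a contradiction, and symmetrically $\psi_b$ cannot be empty. Hence both $\psi_a$ and $\psi_b$ are non-empty, and since $\psi\neq\phi$ we have $(\psi_a,\psi_b)\neq(\alpha,\beta)$, so $|\psi_a|+|\psi_b|<|\alpha|+|\beta|$.

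I would then invoke the induction hypothesis on the pair $(\psi_a,\psi_b)$. Its forbidden-pattern assumption transfers downward: any $\psi_a'\oplus\psi_b'$ with $1\le\psi_a'\le\psi_a$ and $1\le\psi_b'\le\psi_b$ satisfies $\psi_a'\le\alpha$ and $\psi_b'\le\beta$, hence is of the form $\sigma$ is assumed to avoid, so $\sigma$ contains no interval copy of it. By induction $\psi=\psi_a\oplus1\oplus\psi_b$ is therefore a $\sigma$-annihilator. This shows every element of $[1,\phi)\setminus[1,\phi^-]$ is a $\sigma$-annihilator, so $\phi$ is $\sigma$-narrow with $\sigma$-core $\phi^-$; together with the two interval-copy facts verified above, Proposition~\ref{pro-narrow} then gives that $\alpha\oplus1\oplus\beta$ is a $\sigma$-annihilator.

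The step I expect to be the main obstacle is the structural analysis of $[1,\phi)\setminus[1,\phi^-]$: one must check carefully that emptiness of the middle part, or of either outer part, always drops $\psi$ back into $[1,\phi^-]$, since it is exactly this that both forces the decomposition $\psi=\psi_a\oplus1\oplus\psi_b$ with $\psi_a,\psi_b$ non-empty and guarantees the strict decrease $|\psi_a|+|\psi_b|<|\alpha|+|\beta|$ needed to close the induction. Once this is in place, verifying the hypothesis of the inductive call and the interval-copy conditions for Proposition~\ref{pro-narrow} is routine, as both only require propagating the containments $\psi_a\le\alpha$ and $\psi_b\le\beta$.
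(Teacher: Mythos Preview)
Your proof is correct and follows essentially the same approach as the paper: induction on $|\alpha|+|\beta|$, structural analysis showing every $\psi\in[1,\phi)\setminus[1,\phi^-]$ has the form $\psi_a\oplus1\oplus\psi_b$ with non-empty $\psi_a\le\alpha$ and $\psi_b\le\beta$, and then Proposition~\ref{pro-narrow}. The only cosmetic differences are that you verify the interval-copy hypotheses for $\phi,\phi^-$ once at the outset (using $1\oplus\beta$ where the paper uses $\alpha'\oplus1$) and invoke the full inductive conclusion directly, whereas the paper re-applies Proposition~\ref{pro-narrow} to each $\gamma$ after noting it is $\sigma$-narrow by induction.
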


\begin{proof} We proceed by induction on $|\alpha|+|\beta|$. Suppose first that 
$\alpha=\beta=1$. Then trivially $\alpha\oplus1\oplus\beta=123$ is $\sigma$-narrow with $\sigma$-core 
$\alpha\oplus\beta=12$, since the set $[1,123)\setminus[1,12]$ is empty. Moreover, by assumption, 
$\sigma$ has no interval copy of $12$, and therefore also no interval copy of $123$, 
hence $123$ is a $\sigma$-annihilator by Proposition~\ref{pro-narrow}.

Suppose now that $|\alpha|+|\beta|>2$. Define $\phi=\alpha\oplus1\oplus\beta$ and 
$\phi^-=\alpha\oplus\beta$. To prove that $\phi$ is $\sigma$-narrow with $\sigma$-core $\phi^-$, we will show 
that any permutation $\gamma\in[1,\phi)\setminus[1,\phi^-]$ is a $\sigma$-annihilator. Such a 
$\gamma$ has the form $\alpha'\oplus 1\oplus\beta'$ for some $1\le\alpha'\le\alpha$ and 
$1\le\beta'\le\beta$, with $|\alpha'|+|\beta'|<|\alpha|+|\beta|$; note that we here exclude the 
cases $\alpha'=\emptyperm$ and $\beta'=\emptyperm$, because in these cases $\gamma$ would be 
contained 
in~$\phi^-$. By induction, $\gamma$ is $\sigma$-narrow, with $\sigma$-core $\gamma^-=\alpha'\oplus\beta'$. 
Moreover, $\sigma$ has no interval isomorphic to $\gamma$ or $\gamma^-$: observe that if $\sigma$ 
had an interval isomorphic to $\gamma$, it would also have an interval isomorphic to 
$\alpha'\oplus1$, which is forbidden by our assumptions on~$\sigma$. Thus, we may apply 
Proposition~\ref{pro-narrow} to conclude that $\gamma$ is a $\sigma$-annihilator, and in particular 
$\phi$ is $\sigma$-narrow with $\sigma$-core $\phi^-$, as claimed. Proposition~\ref{pro-narrow} then shows 
that $\phi$ is a $\sigma$-annihilator.
\end{proof}

Focusing on the special case $\sigma=1$, which satisfies the assumptions of 
Proposition~\ref{pro-sum} trivially, we obtain the following result.

\begin{corollary}\label{cor-sum}
For any non-empty permutations $\alpha$ and $\beta$, the permutation $\alpha\oplus1\oplus\beta$ is 
an annihilator.
\end{corollary}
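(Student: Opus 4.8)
The plan is to obtain this corollary as the special case $\sigma=1$ of Proposition~\ref{pro-sum}, which has already done all the substantive work. First I would recall that, by the definition of the principal \mob function, we have $\mobp{\pi}=\mobfn{1}{\pi}$, so a permutation is a \mob zero precisely when it is a $1$-zero, and hence a permutation is an annihilator precisely when it is a $1$-annihilator. It therefore suffices to show that $\alpha\oplus1\oplus\beta$ is a $1$-annihilator for all non-empty $\alpha$ and $\beta$.

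The only point requiring verification is that $\sigma=1$ meets the hypothesis of Proposition~\ref{pro-sum}, namely that $\sigma$ contains no interval copy of any permutation of the form $\alpha'\oplus\beta'$ with $1\le\alpha'\le\alpha$ and $1\le\beta'\le\beta$. Since $\alpha'$ and $\beta'$ are both non-empty, every such direct sum $\alpha'\oplus\beta'$ has length at least~$2$. On the other hand, $\sigma=1$ has length~$1$, so it contains no interval of length $2$ or more whatsoever, and in particular no interval copy of any $\alpha'\oplus\beta'$ of the prohibited form. Hence the hypothesis is satisfied vacuously.

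Applying Proposition~\ref{pro-sum} with $\sigma=1$ then yields that $\alpha\oplus1\oplus\beta$ is a $1$-annihilator, which by the identification above is exactly the statement that $\alpha\oplus1\oplus\beta$ is an annihilator. I do not expect any genuine obstacle in this argument: the entire content of the corollary is the trivial observation that the length-one permutation is too short to contain any of the forbidden intervals, so that the hypothesis of Proposition~\ref{pro-sum} becomes vacuous. All the real difficulty has already been absorbed into Propositions~\ref{pro-narrow} and~\ref{pro-sum}.
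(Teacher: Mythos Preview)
Your proposal is correct and matches the paper's approach exactly: the paper simply remarks that $\sigma=1$ satisfies the assumptions of Proposition~\ref{pro-sum} trivially, and deduces the corollary. Your write-up just spells out in a bit more detail why the hypothesis is vacuous for $\sigma=1$.
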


\section{The density of zeros}
\label{section-bounds-for-zn}

Our goal is to find an asymptotic positive lower bound on the proportion of permutations of 
length $n$ whose principal \mob function is 
zero. The key step is the following lemma.

\begin{lemma}\label{lem-incdec}
 Let $s_n$ be the number of permutations of size $n$ with opposing adjacencies. Then 
\[
\frac{s_n}{n!}=\left(1-\frac{1}{e}\right)^2+\cO\left(\frac{1}{n}\right).
\]
\end{lemma}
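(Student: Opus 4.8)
The plan is to count, via inclusion--exclusion, the permutations of length $n$ that have \emph{opposing adjacencies}, i.e.\ those possessing both an up-adjacency and a down-adjacency. Let $U_n$ denote the set of permutations of $\cS_n$ with \emph{no} up-adjacency, and $D_n$ the set with no down-adjacency. A permutation fails to have opposing adjacencies precisely when it lies in $U_n\cup D_n$, so
\[
s_n = n! - |U_n| - |D_n| + |U_n\cap D_n|.
\]
By the natural symmetry $\pi\mapsto \overline{\pi}$ (complementing each value, $\pi_k\mapsto n+1-\pi_k$), which swaps up-adjacencies with down-adjacencies, we have $|U_n|=|D_n|$, and $U_n\cap D_n$ is exactly the set of \emph{adjacency-free} permutations. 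Thus the whole problem reduces to estimating two quantities: the number $a_n:=|U_n|$ of permutations with no up-adjacency (equivalently, no occurrence of two consecutive positions $k,k+1$ with $\pi_{k+1}=\pi_k+1$), and the number $b_n:=|U_n\cap D_n|$ of adjacency-free permutations.

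First I would handle $a_n$. An up-adjacency at positions $k,k+1$ is a pair with $\pi_{k+1}=\pi_k+1$; these are precisely the ``successions'' counted in the classical literature. By inclusion--exclusion over the set of forced successions one obtains
\[
a_n=\sum_{j=0}^{n-1}(-1)^{j}\binom{n-1}{j}(n-j)!,
\]
since choosing $j$ of the $n-1$ possible adjacent value-gaps to be glued contracts the permutation to one on $n-j$ symbols. Dividing by $n!$ and recognizing the resulting alternating sum as a truncated exponential series gives $a_n/n!=1/e+\cO(1/n)$; the error term is uniform because the ratios $\binom{n-1}{j}(n-j)!/n!$ are bounded by $1/j!$ and the tail of $\sum 1/j!$ decays superpolynomially. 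Hence $|U_n|=|D_n|=n!\,(1/e+\cO(1/n))$.

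The main obstacle is the adjacency-free count $b_n=|U_n\cap D_n|$, where I need $b_n/n!=\cO(1/n)$ (or better), since this term is exactly what closes the gap to $(1-1/e)^2$. I would again use inclusion--exclusion, now simultaneously forbidding both successions $\pi_{k+1}=\pi_k+1$ and anti-successions $\pi_{k+1}=\pi_k-1$; each forced adjacency (of either type) again merges two consecutive values into a block. The subtlety is that a chain of overlapping forbidden pairs forms a monotone run whose direction is fixed once its first step is chosen, so the combinatorics of which mergers are compatible must be tracked carefully. Carrying this out yields $b_n=n!\,(e^{-2}+\cO(1/n))$, and in fact the $e^{-2}$ constant precisely cancels: assembling the pieces,
\[
\frac{s_n}{n!}=1-2\Bigl(\frac1e+\cO\bigl(\tfrac1n\bigr)\Bigr)+\Bigl(\frac{1}{e^2}+\cO\bigl(\tfrac1n\bigr)\Bigr)
=\Bigl(1-\frac1e\Bigr)^2+\cO\bigl(\tfrac1n\bigr),
\]
which is the claimed estimate. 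The delicate bookkeeping of overlapping adjacencies in the double inclusion--exclusion for $b_n$ is where I expect the real work to lie; everything else is a routine truncated-exponential estimate.
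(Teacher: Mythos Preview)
Your approach is essentially identical to the paper's: both use the inclusion--exclusion $s_n = n! - 2a_n + b_n$ with $a_n$ counting permutations with no up-adjacency and $b_n$ counting adjacency-free permutations, and both use $a_n/n!\to e^{-1}$ and $b_n/n!\to e^{-2}$; the paper simply cites the classical references (Euler, Kaplansky, OEIS A000255 and A002464) rather than rederiving the asymptotics. One slip to fix: you write ``I need $b_n/n!=\cO(1/n)$'', but of course you need $b_n/n!=e^{-2}+\cO(1/n)$, as you correctly compute two lines later.
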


\begin{proof}
Let $a_n$ be the number of permutations of size $n$ that have no up adjacency, and let $b_n$ be the 
number of permutations of size $n$ that have neither an up adjacency nor a down adjacency.

The numbers $a_n$ (sequence A000255 in the OEIS~\cite{sloane}) have already 
been studied by Euler~\cite{Euler}, and it is known~\cite{RumneyPrimrose}
that they satisfy $a_n/n! =e^{-1}+\cO(n^{-1})$. 

The numbers $b_n$ (sequence A002464 in the OEIS~\cite{sloane}) satisfy the 
asymptotics $b_n/n!= e^{-2} + \cO(n^{-1})$, which follows from the results of 
Kaplansky~\cite{Kaplansky1945} (see also~Albert et al.~\cite{Albert2003}).

We may now express the number $s_n$ of permutations with opposing adjacencies by inclusion-exclusion 
as follows: we subtract from 
$n!$ the number of permutations having no up-adjacency and the number 
of permutations having no down-adjacency, and then we add back the 
number of permutations having no adjacency at all. This yields $s_n=n!-2a_n 
+b_n$, from which the lemma follows by the above-mentioned asymptotics of $a_n$ 
and~$b_n$.
\end{proof}

Combining Theorem~\ref{theorem-PMF-opposing-adjacencies} with Lemma~\ref{lem-incdec} we obtain the 
following consequence, which is the main result of this section.

\begin{corollary}\label{cor-incdec}
For a given $n$ and for $\pi$ a uniformly random permutation of length $n$, 
the probability that $\mobp{\pi}=0$ is at least 
\[
 \left(1-\frac{1}{e}\right)^2-\cO\left(\frac{1}{n}\right).
\]
\end{corollary}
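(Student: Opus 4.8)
The plan is to combine the structural result of Theorem~\ref{theorem-PMF-opposing-adjacencies} with the enumerative asymptotics of Lemma~\ref{lem-incdec}; the corollary then follows almost immediately, so essentially all of the work is already carried out by these two ingredients.

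First I would observe that Theorem~\ref{theorem-PMF-opposing-adjacencies} says precisely that every permutation with opposing adjacencies is a \mob zero. Equivalently, the set of permutations of length $n$ with opposing adjacencies is contained in the set of \mob zeros of length $n$. Hence, with $s_n$ denoting the number of permutations of length $n$ with opposing adjacencies (exactly as in Lemma~\ref{lem-incdec}), the number of \mob zeros of length $n$ is at least $s_n$. Therefore, for a uniformly random $\pi\in\cS_n$, the probability that $\mobp{\pi}=0$ is at least $s_n/n!$.

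Next I would substitute the asymptotics supplied by Lemma~\ref{lem-incdec}, namely $s_n/n! = (1-1/e)^2 + \cO(1/n)$. The only point that needs a word of care is the direction of the error term: since we seek a lower bound, I would note that
\[
\frac{s_n}{n!} = \left(1-\frac{1}{e}\right)^2 + \cO\left(\frac{1}{n}\right) \ge \left(1-\frac{1}{e}\right)^2 - \cO\left(\frac{1}{n}\right),
\]
where in the rightmost expression $\cO(1/n)$ is read as a nonnegative quantity bounded above by a constant multiple of $1/n$. Chaining this with the inequality from the previous step yields that the probability that $\mobp{\pi}=0$ is at least $(1-1/e)^2 - \cO(1/n)$, as claimed.

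There is no genuine obstacle here, since the two nontrivial facts---that opposing adjacencies force a \mob zero, and that a $(1-1/e)^2$ fraction of permutations of length $n$ have opposing adjacencies---are exactly Theorem~\ref{theorem-PMF-opposing-adjacencies} and Lemma~\ref{lem-incdec}, and the corollary is merely their conjunction. The one thing I would take care not to overclaim is that this is only a lower bound: permutations \emph{without} opposing adjacencies may also be \mob zeros, so no matching upper bound, and in particular no equality, is asserted.
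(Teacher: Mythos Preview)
Your proposal is correct and follows exactly the paper's approach: the paper simply states that the corollary is obtained by combining Theorem~\ref{theorem-PMF-opposing-adjacencies} with Lemma~\ref{lem-incdec}, and your argument spells out precisely this combination, including the containment of opposing-adjacency permutations among the \mob zeros and the handling of the sign of the $\cO(1/n)$ term.
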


\section{More complicated examples}\label{sec-special}

We will now construct several specific examples of annihilators and annihilator pairs, which are 
not covered by the general results obtained in the previous sections. We begin with a construction 
of two new annihilator pairs, which we will later use to construct new annihilators.

\begin{theorem}\label{thm-pair}
The two permutations $213$ and $2431$ form an annihilator pair.
\end{theorem}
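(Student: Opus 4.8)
The plan is to run the normal-embeddings machinery of Proposition~\ref{pro-normal}, exactly as in the second proof of Theorem~\ref{theorem-PMF-opposing-adjacencies}, on a minimal counterexample. Let $Z$ be the set of permutations that contain disjoint interval copies of $213$ and $2431$, i.e.\ those expressible as $\inflatesome{\tau}{i,j}{213,2431}$; proving the annihilator-pair property amounts to showing every element of $Z$ is a \mob zero. So I would fix a $\pi\in Z$ of minimum length with $\mobp{\pi}\neq 0$, write $n=|\pi|$, and let $\{r,r',r''\}$ be the positions of the interval copy of $213$ (with $r,r'$ carrying its descent, the down-adjacency $21$) and $\{s,s',s'',s'''\}$ the positions of the interval copy of $2431$ (with $s',s''$ carrying its descent $43$). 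The one place the specific shape of the pair enters is the minimality reduction: any $\lambda\in[1,\pi)$ with $\mobp{\lambda}\neq 0$ satisfies $\lambda\notin Z$, since $\lambda\in Z$ together with $|\lambda|<|\pi|$ would force $\mobp{\lambda}=0$. This plays the role of ``$\lambda$ has no up-adjacency'' in the opposing-adjacencies argument, but it is a genuinely weaker and more awkward hypothesis, which is why the pair is needed: neither permutation is a zero on its own (for instance $\mobfn{1}{213}=1$), so the tipped-poset route cannot simply be applied interval-by-interval.

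Next I would declare an embedding $f$ to be \emph{normal} if $\Img(f)\supseteq[n]\setminus\{r,s'\}$, designating one position inside each descent as droppable; there are four normal embeddings and the identity embedding is normal, so $\sNE(\pi,\pi)=\sE(\pi,\pi)$. Because a normal image omits at most two positions, it has size at least $n-2\geq 5$, so there is \emph{no} normal embedding of the one-point permutation $1$; hence $\NE(1,\pi)=0$ automatically. Consequently, once the hypotheses of Proposition~\ref{pro-normal} are checked, we obtain $\mobp{\pi}=(-1)^{n-1}\NE(1,\pi)=0$, the desired contradiction. Thus the entire content of the proof is the construction, for each $\lambda\in[1,\pi)$ with $\mobp{\lambda}\neq 0$, of a sign-reversing involution on $\sNEl$ establishing $\left|\sNElo\right|=\left|\sNEle\right|$.

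The two natural moves are $\Delta_r$ (sliding across the $213$-descent $r\leftrightarrow r'$) and $\Delta_{s'}$ (sliding across the $2431$-descent $s'\leftrightarrow s''$); each is sign-reversing and preserves normality, and the only subtlety is maintaining $\src\geq\lambda$. Repeating the sliding analysis from the opposing-adjacencies proof shows that $\Delta_r$ keeps us inside $\sNEl$ unless $r\in\Img(f)$ and \emph{every} embedding of $\lambda$ into $\Img(f)$ occupies both $r$ and $r'$ (the descent is ``locked'', so the slide $r\to r'$ is blocked), and symmetrically for $\Delta_{s'}$ versus $\{s',s''\}$. I would therefore partition $\sNEl$ into those $f$ whose $213$-descent is not locked, on which $\Delta_r$ is a clean involution, and the remainder, on which I apply $\Delta_{s'}$, and then patch the two maps into a single involution. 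Verifying that this partition is respected by the two switches and that the patched map is an involution is the crux of the construction.

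The main obstacle is ruling out the doubly-locked configuration using $\lambda\notin Z$. Concretely, one must show that if some normal embedding pins $\lambda$ to the full $213$-descent $\{r,r'\}$ and some normal embedding pins it to the full $2431$-descent $\{s',s''\}$, then $\lambda$ already contains disjoint interval copies of $213$ and $2431$, contradicting minimality. The delicate point is that locking a two-element descent is a priori weaker than containing the whole three- or four-element interval, so the recovery must exploit that a normal image always retains the flanking positions $r''$ and $s,s'''$: an embedding pinned to a descent, together with these forced neighbours, must reconstitute a genuine interval copy of $213$ (respectively $2431$) in $\lambda$, because in each of $213$ and $2431$ the descent is flanked precisely by the values that force its completion. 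Making this reconstruction precise, and checking that the two recovered interval copies are disjoint, is exactly where the specific choice of $213$ and $2431$ does the work, and I expect it to be the hardest part of the argument.
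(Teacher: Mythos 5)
Your outline follows the paper's proof at the level of skeleton (minimal counterexample, Proposition~\ref{pro-normal}, switch maps $\Delta_k$, case analysis over $\lambda$), but the design decision on which the whole argument turns --- \emph{which positions a normal embedding may omit} --- is made differently, and your choice does not work. You drop one position from each \emph{descent} ($r$, the ``2'' of $213$, and $s'$, the ``4'' of $2431$), so both of your switches are blocked by the same kind of obstruction: a down-adjacency of $\lambda$ pinned to that descent. Down-adjacencies in $\lambda$ cannot be excluded by Theorem~\ref{theorem-PMF-opposing-adjacencies} nor by minimality (e.g.\ $132$, $2143$, $13254$ all have nonzero principal \mob function), so everything rests on your claim that a doubly-locked configuration forces $\lambda$ to contain disjoint interval copies of $213$ and $2431$. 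That claim is false: being pinned to $\{r,r'\}$ gives $\lambda$ only an interval copy of $21$, and being pinned to $\{s',s''\}$ gives only $21$ or (together with $s$) $132$; the flanking positions $r''$, $s$, $s'''$ do lie in $\Img(f)$, but nothing forces them into the image of the embedding of $\lambda$, so no ``reconstitution'' of $213$ or $2431$ occurs.

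Here is a concrete counterexample to your key claim. Take $\pi=2431\oplus213=2431657$, so $s=1$, $s'=2$, $s''=3$, $s'''=4$ and $r=5$, $r'=6$, $r''=7$, and take $\lambda=13254=1\oplus21\oplus21$. Then $\mobp{\lambda}=1\neq0$, the interval copies occurring in $\lambda$ are exactly $1$, $21$, $132$, $2143$ and $\lambda$ itself (in particular no $213$ and no $2431$), and $\lambda$ has a \emph{unique} embedding into $\pi$, with image $\{1,2,3,5,6\}$: its $132$-interval is forced onto $s,s',s''$ and its top $21$-interval onto $r,r'$. Thus this single embedding pins both descents, i.e.\ the configuration is doubly locked, yet $\lambda$ contains neither forbidden interval copy, so neither minimality nor any earlier theorem removes it. Worse, with your notion of normality ($\Img(f)\supseteq[7]\setminus\{2,5\}$) the only one of the four normal embeddings whose source contains $\lambda$ is the full embedding, so $\left|\sNElo\right|=1$ while $\left|\sNEle\right|=0$: identity~\eqref{eq-cancel} itself fails, and both $\Delta_r$ and $\Delta_{s'}$ map this embedding out of $\sNEl$. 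So the problem is not in how you patch the two partial involutions --- no involution can exist --- but in the choice of droppable positions. This is precisely what the paper's proof is engineered around: it drops the ``3'' of $213$ and the ``3'' and ``1'' of $2431$, so that its slides are blocked, respectively, only by an up-adjacency or a $213$-interval of $\lambda$, only by a down-adjacency, and only by a $2431$-, $321$- or $231$-interval --- obstructions that its case analysis can genuinely rule out using Theorem~\ref{theorem-PMF-opposing-adjacencies}, Corollary~\ref{cor-sum} and minimality.
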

\begin{proof}
Our proof is based on the concept of normal embeddings and follows a similar structure as the normal 
embedding proof of Theorem~\ref{theorem-PMF-opposing-adjacencies}.

Suppose for contradiction that there is a permutation $\pi$ that contains an interval 
isomorphic to $213$ as well as an interval isomorphic to~$2431$, and that $\mobp{\pi}\neq0$.
Fix a smallest possible $\pi$, and let $n$ be its length. Note that an interval 
isomorphic to $213$ is necessarily disjoint from an interval isomorphic to~$2431$, and in 
particular, $n\ge 7$.

Let $i$, $i+1$ and $i+2$ be three positions of $\pi$ containing an interval copy of $213$, and let 
$j$, $j+1$, $j+2$ and $j+3$ be four positions containing an interval copy of~$2431$. We will apply 
the approach of Proposition~\ref{pro-normal}, with $\sigma=1$. We will say that an embedding 
$f\in\sE(*,\pi)$ is \emph{normal} if $\Img(f)$ is a superset of $[n]\setminus\{i+2, j+2, j+3\}$.  
Informally, the image of a normal embedding contains all the positions of $\pi$, except possibly 
some of the three positions that correspond to the value $3$ of $213$ or the values $3$ and $1$ 
of~$2431$ in the chosen interval copies of $213$ and $2431$,
as shown in Figure~\ref{figure-intervals-in-213-2431}. In particular, there are eight normal 
embeddings. 
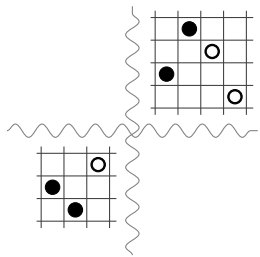
\begin{figure}[!ht]
	\begin{center}
		\begin{tikzpicture}[scale=0.3]
		\sqat{3}{0}{0};
		\fpoint{1}{2}
		\fpoint{2}{1};
		\hpoint{3}{3};
		\sqat{4}{5}{5};
		\fpoint{6}{7};
		\fpoint{7}{9};
		\hpoint{8}{8};
		\hpoint{9}{6};
	    \path [draw=gray,wavy]	(-1,4.5) -- (10,4.5);
	    \path [draw=gray,wavy]	(4.5,-1) -- (4.5,10);
		\end{tikzpicture}
	\end{center}
\caption{The intervals $213$ and $2431$ in Theorem~\ref{thm-pair}.  Normal embeddings may omit some of the hollow points.}
\label{figure-intervals-in-213-2431}
\end{figure}

We now verify the assumptions of Proposition~\ref{pro-normal}. We obviously have 
$\sNE(\pi,\pi)=\sE(\pi,\pi)$. The main task is to verify, for a given $\lambda\in[1,\pi)$ with 
$\mobp{\lambda}\neq0$, the identity~\eqref{eq-cancel} of Proposition~\ref{pro-normal}, that is, the 
identity $\left|\sNElo\right|= \left|\sNEle\right|$. 

Fix a $\lambda\in[1,\pi)$ such that $\mobp{\lambda}\neq0$, and let $\sNEl$ be the set 
$\sNElo\cup \sNEle$. We will describe a sign-reversing involution $\Phi_\lambda$ on $\sNEl$. 
The involution $\Phi_\lambda$ will always be equal to a switch operation $\Delta_k$, where the 
choice of $k$ will depend on~$\lambda$.

Suppose first that $\lambda$ does not contain any down-adjacency. We claim that 
$\Delta_{j+2}$ is an involution on the set $\sNEl$. To see this, choose $f\in\sNEl$ and define 
$g=\Delta_{j+2}(f)$. It is clear that $g$ is a normal embedding. 

To prove that $g$ belongs to $\sNEl$, it remains to show that $\src(g)$ contains~$\lambda$, or  
equivalently, that there is an embedding of $\lambda$ into $\pi$ that is contained in~$g$. 
Let $h$ be an embedding of $\lambda$ into $\pi$ which is contained in~$f$. If $j+2\not\in\Img(h)$, 
then $h$ is also contained in $g$ and we are done. 

Suppose then that $j+2\in\Img(h)$. This means that $j+1$ is not in $\Img(h)$, because $\pi$ has a 
down-adjacency at positions $j+1$ and $j+2$, while $\lambda$ has no down-adjacency. We 
now modify $h$ in such a way that the element previously mapped to $j+2$ will be mapped to $j+1$, 
while the mapping of the remaining elements remains unchanged. Let $h'$ be the embedding obtained 
from $h$ by this modification; formally, we have $h'=\Delta_{j+1}(\Delta_{j+2}(h))$. Since the two 
elements $\pi_{j+1}$ and $\pi_{j+2}$ form an adjacency, we have $\src(h')=\src(h)=\lambda$. 
Moreover, $h'$ is contained in $g$ (recall that $g$ is normal, and therefore $\Img(g)$ contains 
$j+1$). Consequently, $g$ is in $\sNEl$, as claimed.

We now deal with the case when $\lambda$ contains a down-adjacency. Since 
$\mobp{\lambda}\neq0$, it follows by Theorem~\ref{theorem-PMF-opposing-adjacencies} that $\lambda$ 
has no up-adjacency. We distinguish two subcases, depending on whether $\lambda$ 
contains an interval copy of~$2431$.

Suppose that $\lambda$ contains an interval copy of~$2431$. We will show that in this case, 
$\Delta_{i+2}$ is a sign-reversing involution on~$\sNEl$. We begin by observing that $\lambda$ has 
no interval copy of $213$, otherwise $\lambda$ would be a counterexample to Theorem~\ref{thm-pair}, 
contradicting the minimality of~$\pi$. Fix again an embedding $f\in\sNEl$, and define 
$g=\Delta_{i+2}(f)$. As in the previous case, $g$ is clearly normal, and we only need to show that 
there is an embedding of $\lambda$ into $\pi$ contained in~$g$. Let $h$ be an embedding of $\lambda$ 
into $\pi$ contained in $f$. If $i+2\not\in\Img(h)$, then $h$ is contained in $g$ and we are done, 
so suppose $i+2\in\Img(h)$. If at least one of the two positions $i$ and $i+1$ belongs to $\Img(h)$, 
then $\lambda$ contains an up-adjacency or an interval copy of $213$, contradicting our 
assumptions. Therefore, we can modify $h$ so that the element mapped to $i+2$ is mapped to $i$ 
instead, obtaining an embedding of $\lambda$ contained in $g$ and showing that~$g\in\sNEl$.

Finally, suppose that $\lambda$ has no interval copy of $2431$. In this case, we show that 
$\Delta_{j+3}$ is the required involution on $\sNEl$. As in the previous cases, we fix $f\in\sNEl$, 
define $g=\Delta_{j+3}(f)$, and let $h$ be an embedding of $\lambda$ contained in~$f$; we again 
want to modify $h$ into an embedding $\lambda$ contained in~$g$. Let $\alpha$ be the subpermutation 
of $\lambda$ formed by those positions that are mapped into the set $J=\{j,j+1,j+2,j+3\}$ by~$h$. 
Recall that the positions in $J$ induce an interval copy of $2431$ in~$\pi$. In particular, 
$\alpha\le 2431$, and $\lambda$ has an interval copy of~$\alpha$. We know that $\alpha\neq 2431$, 
since we assume that $\lambda$ has no interval copy of $2431$. Also, $\alpha\neq 321$, since $321$ 
is an annihilator by Corollary~\ref{cor-sum}, while $\mobp{\lambda}\neq0$. Finally, $\alpha \neq 
231$, since $\lambda$ has no up-adjacency. This implies that $\alpha\le 132$, and we can 
modify $h$ so that all the positions originally mapped into $J$ will get mapped into 
$J\setminus\{j+3\}$, obtaining an embedding of $\lambda$ into $\pi$ contained in $g$.

Having thus verified the assumptions of Proposition~\ref{pro-normal}, we can conclude that 
$\mobp{\pi}=(-1)^{|\pi|-1}\NE(1,\pi)=0$, a contradiction.
\end{proof}

\begin{theorem}\label{thm-pair2}
The two permutations $2143$ and $2431$ form an annihilator pair.
\end{theorem}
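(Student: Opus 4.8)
The plan is to mimic the normal-embedding proof of Theorem~\ref{thm-pair} essentially verbatim, since $2143$ and $2431$ differ from $213$ and $2431$ only in the first member of the pair. I would suppose for contradiction that $\pi$ is a smallest permutation containing disjoint interval copies of $2143$ and $2431$ with $\mobp{\pi}\neq0$. Let $i,i+1,i+2,i+3$ be positions carrying an interval copy of $2143$, and $j,j+1,j+2,j+3$ positions carrying an interval copy of $2431$. The definition of normal embedding should again declare $f\in\sE(*,\pi)$ normal if $\Img(f)$ is a superset of $[n]\setminus\{\text{some chosen positions}\}$; I expect to omit the positions corresponding to the value $3$ and value $4$ of $2143$ (which form the down-adjacency $43$ inside it) together with the value $3$ and value $1$ of $2431$, so that four positions may be omitted and there are sixteen normal embeddings. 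The verification of $\sNE(\pi,\pi)=\sE(\pi,\pi)$ is immediate, and the real work is checking equation~\eqref{eq-cancel} for each $\lambda\in[1,\pi)$ with $\mobp{\lambda}\neq0$ by exhibiting a sign-reversing involution $\Delta_k$ on $\sNEl$.

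The case analysis should parallel the previous theorem. First, if $\lambda$ has no down-adjacency, then I would switch on one of the omitted positions that sits immediately above (in the adjacency sense) a position guaranteed to be in every normal image; the argument is that if $h$ is an embedding of $\lambda$ contained in $f$ using the omitted position, then its adjacency-partner cannot also be used (as $\lambda$ has no down-adjacency), so $h$ can be slid onto the partner position, which is always in $\Img(g)$, producing an embedding of $\lambda$ contained in $g=\Delta_k(f)$. Second, if $\lambda$ does contain a down-adjacency, then by Theorem~\ref{theorem-PMF-opposing-adjacencies} it has no up-adjacency, and I would split on whether $\lambda$ contains an interval copy of $2431$: if it does, then by minimality of $\pi$ it cannot also contain an interval copy of $2143$, and I switch on one of the positions omitted from the $2143$-copy, using the absence of an up-adjacency and of a $2143$-interval in $\lambda$ to slide the embedding off that position. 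If $\lambda$ has no interval copy of $2431$, I would switch on the position $j+3$ (or the appropriate omitted position of the $2431$-copy), analysing the subpermutation $\alpha\le 2431$ that $h$ maps into $J=\{j,j+1,j+2,j+3\}$: one rules out $\alpha=2431$ (assumption), $\alpha=321$ (an annihilator by Corollary~\ref{cor-sum}), and $\alpha=231$ (contains an up-adjacency), forcing $\alpha\le 132$ so the embedding can be pushed into $J\setminus\{j+3\}$.

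The step I expect to be the main obstacle is the case where $\lambda$ contains a down-adjacency, contains no interval copy of $2431$, yet the subpermutation $\alpha$ landing in the $2143$-interval must also be controlled, because $2143$ itself contains the down-adjacency $43$. Unlike $213$, the first member $2143$ has a down-adjacency of its own, so when $\lambda$ has a down-adjacency I can no longer freely switch on an omitted position of the $2143$-copy without worrying that the switch destroys an embedded down-adjacency of $\lambda$. I would resolve this by being careful about exactly which positions I declare omittable in the $2143$-copy: I want the two omitted positions of the $2143$-copy to be the two positions forming its $43$ down-adjacency, so that sliding an embedding between them preserves a down-adjacency of $\lambda$ whenever one is present there, exactly as the $j+1,j+2$ slide works in the proof of Theorem~\ref{thm-pair}. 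Thus the correct choice of involution in the no-$2431$ subcase should be a switch on an omitted position of the $2143$-copy, with the adjacency-preservation argument (via $\Delta$ composed with its partner switch) carrying the embedding of $\lambda$ across, mirroring the $\src(h')=\src(h)$ computation used before. Once this choice is pinned down, the remaining subcases reduce to the template already established, and Proposition~\ref{pro-normal} then gives $\mobp{\pi}=(-1)^{|\pi|-1}\NE(1,\pi)$; since every normal embedding retains positions forcing $|\Img|\ge 2$, there is no normal embedding of $1$, so $\mobp{\pi}=0$, the desired contradiction.
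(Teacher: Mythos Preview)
Your template—minimal counterexample, normal embeddings, $\Delta_k$ involutions, Proposition~\ref{pro-normal}—matches the paper's, but your choice of omitted positions in the $2143$-copy is wrong and you miss a crucial appeal to Theorem~\ref{thm-pair}. The paper omits only \emph{two} positions, $i$ (the value $2$ of $2143$) and $j+3$ (the value $1$ of $2431$), giving four normal embeddings, not sixteen. The delicate subcase is when $\lambda$ has a down-adjacency (hence no up-adjacency) and contains an interval copy of $2431$. Minimality of $\pi$ rules out an interval copy of $2143$ in $\lambda$, but the part $\alpha$ of $\lambda$ landing in $I$ could still be $132$ or $213$. With your omitted positions $\{i+2,i+3\}$, the case $\alpha=132$ is fatal: both embeddings of $132$ into $2143$ (at positions $\{2,3,4\}$ and $\{1,3,4\}$) use positions $3$ and $4$, so no switch on $i+2$ or $i+3$ can be an involution on $\sNEl$. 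The paper instead first invokes Theorem~\ref{thm-pair} to rule out $\alpha=213$ (a $2431$-interval together with a $213$-interval would already force $\mobp{\lambda}=0$), whence $\alpha\le 132$; and then uses $\Delta_i$, which works because $132$ embeds into $2143$ at positions $\{2,3,4\}$, avoiding position~$1$.

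Your third paragraph also misidentifies where the obstacle lies. When $\lambda$ has no interval copy of $2431$, the involution is $\Delta_{j+3}$ and only the part landing in $J$ matters—this step is identical to the corresponding one in Theorem~\ref{thm-pair}, and the $2143$-interval plays no role there. The difficulty is entirely in the subcase where $\lambda$ \emph{does} have a $2431$-interval, and your proposed remedy (omit both endpoints of the $43$ down-adjacency in $2143$) is precisely what makes that subcase fail.
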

\begin{proof}
The structure of the proof is very similar to the proof of Theorem~\ref{thm-pair}, except we will 
use a different form of normal embeddings. Let $\pi$ be again a smallest counterexample, and let $n$ 
be its size. Since the interval copy of $2143$ is disjoint from the interval copy of $2431$, we 
know that $n\ge8$.

Let $I=\{i,i+1,i+2,i+3\}$ be a set of positions inducing an interval copy of $2143$ in $\pi$, and 
let $J=\{j,j+1,j+2,j+3\}$ be set of positions inducing an interval copy of $2431$. We will say that 
an embedding $f\in\sE(*,\pi)$ is \emph{normal} if $\Img(f)$ is a superset of 
$[n]\setminus\{i,j+3\}$, 
as illustrated in Figure~\ref{figure-intervals-in-2143-2431}. 
In particular, there are four normal embeddings.
\begin{figure}[!ht]
	\begin{center}
		\begin{tikzpicture}[scale=0.3]
		\sqat{4}{0}{0};
		\hpoint{1}{2}
		\fpoint{2}{1};
		\fpoint{3}{4};
		\fpoint{4}{3};
		\sqat{4}{6}{6};
		\fpoint{7}{8};
		\fpoint{8}{10};
		\fpoint{9}{9};
		\hpoint{10}{7};
	    \path [draw=gray,wavy]	(-1,5.5) -- (11,5.5);
	    \path [draw=gray,wavy]	(5.5,-1) -- (5.5,11);
		\end{tikzpicture}
	\end{center}
\caption{The intervals $2143$ and $2431$ in Theorem~\ref{thm-pair2}.  Normal embeddings may omit some of the hollow points.}
\label{figure-intervals-in-2143-2431}
\end{figure}
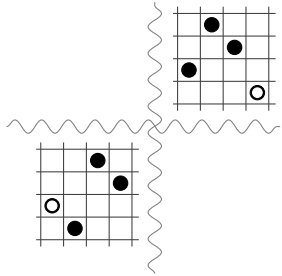

In order to apply Proposition~\ref{pro-normal}, we need to verify its assumptions, in particular 
the identity \eqref{eq-cancel}. Choose a $\lambda\in[1,\pi)$ with $\mobp{\lambda}\neq0$. Let $\sNEl$ 
again be the union of $\sNElo$ and~$\sNEle$. We will show that, depending on the choice of 
$\lambda$, $\Delta_{i}$ or $\Delta_{j+3}$ is an involution on~$\sNEl$.

Suppose that $\lambda$ has no down-adjacency. It follows that an embedding of $\lambda$ into 
$\pi$ cannot contain both $i$ and $i+1$ in its image. We will show that $\Delta_i$ is a 
sign-reversing involution on $\sNEl$. To see this, choose $f\in\sNEl$, define $g=\Delta_i(f)$, and 
let $h$ be an embedding of $\lambda$ contained in $f$. If $\Img(h)$ does not contain $i$, than $h$ 
is contained in $g$, otherwise we modify $h$ so that the element mapped to $i$ will get mapped to 
$i+1$ instead, and the newly obtained embedding is an embedding of $\lambda$ contained in~$g$.

Suppose next that $\lambda$ contains a down-adjacency, and therefore it has no  
up-adjacency. We will distinguish two subcases, depending on whether $\lambda$ contains an interval 
copy of~$2431$. If $\lambda$ contains an interval copy of $2431$, then by Theorem~\ref{thm-pair}, 
$\lambda$ has no interval copy of~$213$. By minimality of $\pi$, we also know that $\lambda$ has no 
interval copy of $2143$. We will show that $\Delta_i$ is again an involution on~$\sNEl$. Let $f$, 
$g$ and $h$ be as in the previous case. Let $\alpha$ be the subpermutation of $\lambda$ induced by 
those positions that are mapped into $I$ by~$h$. Since $\alpha$ is neither $2143$ 
nor $213$, we know that $\alpha\le 132$, and in particular, $h$ can be modified into an embedding 
of $\lambda$ that does not have $i$ in its image, without affecting the values not mapped into $I$, 
and the modified embedding is contained in~$g$. 

Suppose finally that $\lambda$ has no interval copy of~$2431$. Here, by the same argument as in the 
corresponding part of the proof of Theorem~\ref{thm-pair}, we conclude that $\Delta_{j+3}$ is the 
required involution on~$\sNEl$. 

By Proposition~\ref{pro-normal}, $\mobp{\pi}=(-1)^{|\pi|-1}\NE(1,\pi)=0$, a contradiction.
\end{proof}

\begin{theorem}\label{thm-pair3}
The permutations $312$ and $23514$ form an annihilator pair.
\end{theorem}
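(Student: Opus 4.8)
The plan is to follow the normal-embedding template of Theorems~\ref{thm-pair} and~\ref{thm-pair2}, applying Proposition~\ref{pro-normal} with $\sigma=1$. I would take a smallest permutation $\pi$ containing disjoint interval copies of $312$ and $23514$ with $\mobp{\pi}\neq0$, placing the copies at positions $i,i+1,i+2$ and $j,\dots,j+4$; here the first block has its top at $i$ and an ascent at $i+1,i+2$, while the second block has its only adjacency at the ascent $j,j+1$, its minimum at $j+3$, and its maximum at $j+2$. Disjointness forces $n\ge 8$. I would call $f\in\sE(*,\pi)$ \emph{normal} if $\Img(f)$ omits only a short prescribed list of positions, namely the top $i$ of the first block together with suitable positions of the second block (the natural candidates being $j$, $j+2$, $j+3$). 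Exactly as before, $\NE(1,\pi)=0$ is then automatic from $n\ge 8$, so everything reduces to verifying the cancellation identity~\eqref{eq-cancel} for each $\lambda\in[1,\pi)$ with $\mobp{\lambda}\neq0$ by producing a sign-reversing involution $\Delta_k$ on $\sNEl$.

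The easy cases parallel Theorem~\ref{thm-pair}. If $\lambda$ has no up-adjacency, then it meets at most one endpoint of the ascent $j,j+1$, and sliding along that ascent (the switch $\Delta_j$, moving $j$ to $j+1$) is the required involution. Otherwise $\lambda$ has an up-adjacency and hence, by Theorem~\ref{theorem-PMF-opposing-adjacencies}, no down-adjacency. If $\lambda$ also contains an interval copy of $23514$, then by minimality of $\pi$ it cannot contain an interval copy of $312$; the absence of a down-adjacency and of a $312$-copy then forces any embedding meeting $i$ to meet no other position of the first block, so $\Delta_i$ (sliding the top $i$ down to $i+2$) preserves the source, mirroring the corresponding case of Theorem~\ref{thm-pair}.

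The remaining case---$\lambda$ has an up-adjacency, no down-adjacency, and no interval copy of $23514$---is where the proof genuinely departs from the length-four examples, and I expect it to be the main obstacle. Here one must re-embed within the $23514$-block, but $23514$ has \emph{two} maximal admissible residual patterns: deleting $j$ (or $j+1$) leaves $2413$, while deleting $j+2$ leaves $2314$, and these length-four patterns contain neither the other, so no single compression position suffices---in sharp contrast with $2431$, where deleting the minimum left the single residual $132$. The strategy is to choose between $\Delta_j$ and $\Delta_{j+2}$ according to $\lambda$: compressing into $2413$ via $\Delta_j$ handles every admissible block sub-pattern except $2314$, whereas compressing into $2314$ via $\Delta_{j+2}$ handles every admissible sub-pattern except $2413$ and $312$. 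Before that I would prune the possible block sub-patterns using that $\lambda$ has no down-adjacency: this excludes $21$, $132$, and $213$, and imposes coupling constraints such as ``any embedding meeting both the maximum $j+2$ and the value-$4$ position $j+4$ must also meet the minimum $j+3$'' (otherwise the values $5,4$ become adjacent and decreasing in $\lambda$). It also excludes the monotone residuals $123$ and $321$, which are annihilators by Corollary~\ref{cor-sum}.

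The crux is then to show the three switches $\Delta_i,\Delta_j,\Delta_{j+2}$ jointly cover all $\lambda$: if $\lambda$ has no interval copy of $312$ one falls back on $\Delta_i$ as above; if $\lambda$ has no interval copy of $2314$ one uses $\Delta_j$; and if $\lambda$ has no interval copy of $2413$ one uses $\Delta_{j+2}$. The hard part will be the overlap, where $\lambda$ carries interval copies of $312$, of $2413$, and of $2314$ simultaneously, so that each single compression is blocked by some embedding. The plan is to rule this overlap out for $\mobp{\lambda}\neq0$ by arguing that a $2413$-interval, supplemented by the extra element (just below and just before it) furnished by the accompanying $2314$- or $312$-structure, would assemble an interval copy of $23514$---contradicting the case hypothesis---or else produce a short monotone interval, an annihilator by Corollary~\ref{cor-sum}. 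Once the overlap is excluded, at least one of $\Delta_i,\Delta_j,\Delta_{j+2}$ is a sign-reversing involution on $\sNEl$, identity~\eqref{eq-cancel} holds for every relevant $\lambda$, and Proposition~\ref{pro-normal} gives $\mobp{\pi}=(-1)^{n-1}\NE(1,\pi)=0$, the desired contradiction.
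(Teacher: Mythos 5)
Your scaffolding (minimal counterexample, normal embeddings, Proposition~\ref{pro-normal} with $\sigma=1$, switch involutions) matches the paper, and your first two cases are sound: $\lambda$ with no up-adjacency is handled by $\Delta_j$, and $\lambda$ with an interval copy of $23514$ is handled by minimality of $\pi$ plus $\Delta_i$. The genuine gap is exactly where you predicted the "main obstacle": your remaining case. The missing idea is a symmetry observation, not an assembly argument: the pair $(312,2314)$ is itself an annihilator pair, being the image of the pair $(213,2431)$ of Theorem~\ref{thm-pair} under the inverse-followed-by-reverse symmetry of the containment order (these symmetries preserve intervals, interval copies, and the \mob function). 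Consequently a $\lambda$ with $\mobp{\lambda}\neq 0$ can never contain interval copies of both $312$ and $2314$, so your two sub-cases ``no $312$-copy $\Rightarrow \Delta_i$'' and ``no $2314$-copy $\Rightarrow \Delta_j$'' already exhaust the remaining case; the switch $\Delta_{j+2}$ is never needed (it is redundant anyway, since it requires the absence of $312$-copies, in which case $\Delta_i$ already applies). This is precisely the paper's argument: either $\lambda$ has an interval copy of $23514$ or of $2314$ (then it has no $312$-copy, by minimality or by Theorem~\ref{thm-pair} under symmetry, and $\Delta_i$ works), or it has neither (then every non-annihilator pattern realized inside $J$ is contained in $2413$, and $\Delta_j$ works).

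Your proposed substitute---that coexisting interval copies of $312$, $2413$ and $2314$ would ``assemble'' an interval copy of $23514$ or a monotone interval---is false, because the three copies need not be anywhere near each other. Concretely, inflate the simple permutation $3142$ by $312$, $2413$, $2314$, $1$ at its four positions, obtaining $\lambda = 8,6,7,2,4,1,3,10,11,9,12,5$: it has disjoint interval copies of all three patterns, only up-adjacencies, no annihilator interval, and no interval of length five at all, so nothing assembles; it is indeed a zero, but only via the symmetric pair $(312,2314)$. A secondary problem: if you let normal embeddings omit $j+2$ and $j+3$ as well (your ``natural candidates''), the re-embedding step for $\Delta_j$ breaks, since for an $f$ omitting $j+2$ the positions of $J$ available after the switch are $\{j+1,j+3,j+4\}$, whose pattern is $213$, and the block pattern $231$ (realizable at $\{j,j+1,j+3\}$ and perfectly possible for a non-zero $\lambda$) cannot be re-embedded there. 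The paper's definition---normal embeddings may omit only $i$ and $j$---avoids this; you should commit to it.
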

\begin{proof}
The basic approach is the same as in the proofs of Theorem~\ref{thm-pair} and~\ref{thm-pair2}, so 
we only focus on the ideas specific to this proof. Let us write $\alpha=312$, $\beta=23514$, and 
$\beta^-=2314$. Let $\pi\in\cS_n$ be the smallest counterexample, and suppose that it contains an 
interval copy of $\alpha$ at positions $I=\{i,i+1,i+2\}$ and an interval copy of $\beta$ at 
positions $J=\{j,\dotsc,j+4\}$. We say that an embedding $f\in\sE(*,\pi)$ is \emph{normal} if 
$\Img(f)$ is a superset of $[n]\setminus\{i,j\}$,
as illustrated in Figure~\ref{figure-intervals-in-312-23514}. To prove the theorem, it is enough to show that for 
every $\lambda\in[1,\pi)$ such that $\mobp{\lambda}\neq0$, either $\Delta_i$ or $\Delta_j$ is an 
involution on~$\sNEl$.
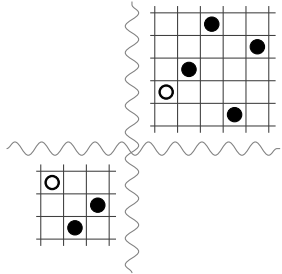
\begin{figure}[!ht]
	\begin{center}
		\begin{tikzpicture}[scale=0.3]
		\sqat{3}{0}{0};
		\hpoint{1}{3};
		\fpoint{2}{1};
		\fpoint{3}{2};
		\sqat{5}{5}{5};
		\hpoint{6}{7};
		\fpoint{7}{8};
		\fpoint{8}{10};
		\fpoint{9}{6};
        \fpoint{10}{9};
	    \path [draw=gray,wavy]	(-1,4.5) -- (11,4.5);
	    \path [draw=gray,wavy]	(4.5,-1) -- (4.5,11);
		\end{tikzpicture}
	\end{center}
\caption{The intervals $312$ and $23514$ in Theorem~\ref{thm-pair3}.  Normal embeddings may omit some of the hollow points.}
\label{figure-intervals-in-312-23514}
\end{figure}

Choose $\lambda$ as above. Suppose first that $\lambda$ contains neither $\beta$ nor $\beta^-$ as 
interval copy. Observe that any non-annihilator subpermutation of $\beta$ other than 
$\beta$ or $\beta^-$ is contained in $2413$. This implies that for any embedding $h\in\sE(\lambda,\pi)$, the positions in $\Img(h)\cap J$ form 
a subpermutation of $2413$.  We can therefore modify $h$ inside $J$ so that its 
image avoids the position~$j$. We easily conclude that $\Delta_j$ is an involution on~$\sNEl$.

Suppose therefore that $\lambda$ contains $\beta$ or $\beta^-$ as an interval. Then $\lambda$ has an 
up-adjacency, and therefore has no down-adjacency. It also follows that $\lambda$ has no interval 
copy of $\alpha$, otherwise we get a contradiction with the minimality of $\pi$ (if $\lambda$ 
has an interval copy of $\beta$) or with Theorem~\ref{thm-pair} (if $\lambda$ has an interval 
copy of $\beta^-$). Since $\lambda$ contains neither $21$ nor $312$ as interval, we easily conclude 
that $\Delta_i$ is an involution on~$\sNEl$.
\end{proof}

\begin{theorem}\label{thm-pair4} The permutations $25134$ and $23514$ form an 
annihilator pair.
\end{theorem}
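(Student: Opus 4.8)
The plan is to follow exactly the normal-embedding template established in Theorems~\ref{thm-pair}, \ref{thm-pair2} and~\ref{thm-pair3}. We take a smallest counterexample $\pi\in\cS_n$ containing disjoint interval copies of $\alpha=25134$ at positions $I=\{i,\dotsc,i+4\}$ and of $\beta=23514$ at positions $J=\{j,\dotsc,j+4\}$, with $\mobp{\pi}\neq0$. Since the two intervals are disjoint we have $n\ge 10$. Both $25134$ and $23514$ are adjacency-free (no two consecutive values are adjacent in position), so we cannot switch a single ``extremal'' position the way a monotone adjacency permits; instead the image of a normal embedding should be allowed to omit one carefully chosen position from each interval. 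Writing $\beta^-=2314$, I expect the correct choice of normal embedding to declare $f\in\sE(*,\pi)$ \emph{normal} if $\Img(f)$ is a superset of $[n]\setminus\{i',j\}$ for suitable deletable positions $i'\in I$ and $j\in J$; the position $j$ (the value $2$ of $\beta$) is the natural one to delete on the $\beta$-side, matching Theorem~\ref{thm-pair3}, and on the $\alpha$-side one deletes the position of $\alpha$ whose removal leaves $\beta^-$-comparable data, giving four normal embeddings.

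First I would verify the trivial hypothesis of Proposition~\ref{pro-normal}, namely $\sNE(\pi,\pi)=\sE(\pi,\pi)$, which is immediate. The substance is the cancellation identity~\eqref{eq-cancel}: for each $\lambda\in[1,\pi)$ with $\mobp{\lambda}\neq0$ I must exhibit a single switch $\Delta_k$ (with $k\in\{i',j\}$ depending on $\lambda$) that is a sign-reversing involution on $\sNEl=\sNElo\cup\sNEle$. The case analysis should split on which of $\beta$ and $\beta^-$ occurs as an interval in $\lambda$. If $\lambda$ contains an interval copy of neither $\beta$ nor $\beta^-$, then every subpermutation of $\beta$ realisable as $\Img(h)\cap J$ for an embedding $h$ of $\lambda$ is a non-annihilator proper subpermutation, and I expect (exactly as in Theorem~\ref{thm-pair3}) that all such subpermutations are contained in some fixed small pattern such as $2413$, so that any $h$ can be rerouted inside $J$ to avoid position $j$; this makes $\Delta_j$ the required involution.

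In the complementary case $\lambda$ contains $\beta$ or $\beta^-$ as an interval. As in the earlier proofs, this forces $\lambda$ to have an up-adjacency (since $23514$ and $2314$ each contain the up-adjacency coming from the value-jump $\ldots 45\ldots$ or $\ldots 4\ldots$—here I would identify the concrete adjacency), hence by Theorem~\ref{theorem-PMF-opposing-adjacencies} $\lambda$ has no down-adjacency, and by minimality of $\pi$ together with Theorems~\ref{thm-pair}, \ref{thm-pair2} and~\ref{thm-pair3} one rules out $\lambda$ containing interval copies of the partner permutations $25134$, $213$, $2143$, $312$. The remaining combinatorial work is to show that, under these exclusions, the positions of $\lambda$ mapped into $I$ by any embedding $h$ form a small enough subpermutation of $25134$ that $h$ can be modified to avoid $i'$, making $\Delta_{i'}$ the involution. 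Once both cases are settled, Proposition~\ref{pro-normal} yields $\mobp{\pi}=(-1)^{n-1}\NE(1,\pi)$, and since every normal embedding retains at least one position from each interval there is no normal embedding of the singleton $1$, forcing $\NE(1,\pi)=0$ and the contradiction $\mobp{\pi}=0$.

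The main obstacle I anticipate is the $\alpha$-side bookkeeping in the second case: because $25134$ is longer and more tangled than the $213$/$2143$/$312$ partners of the earlier theorems, classifying precisely which subpermutations of $25134$ can appear as $\Img(h)\cap I$, and checking that each is reroutable off position $i'$ without disturbing the rest of $h$, is subtler than the tidy ``$\alpha\le 132$'' reductions used before. The correct identification of the deletable position $i'\in I$ is what controls this, and getting it right—so that every admissible subpermutation of $25134$ avoids forcing value-adjacency at $i'$—is the crux of the argument; the $\beta$-side, by contrast, should transfer almost verbatim from Theorem~\ref{thm-pair3}.
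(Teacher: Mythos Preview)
Your plan is essentially the paper's approach---normal embeddings allowing omission of one position from each interval, Proposition~\ref{pro-normal}, and a switch $\Delta_k$ chosen according to $\lambda$---and the $\beta$-side, where you take the deletable position to be $j$ and argue exactly as in Theorem~\ref{thm-pair3}, is what the paper does. One factual slip: neither $25134$ nor $23514$ is adjacency-free (they have up-adjacencies at positions $4,5$ and $1,2$, respectively); this does not affect the argument.

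Where you diverge is in the organisation of the case analysis, and here the paper has a simplification you have missed. Observe that $25134$ and $23514$ are reverse-complements of one another; consequently $\alpha^-=\alpha_3[\epsilon]=1423$ and $\beta^-=\beta_3[\epsilon]=2314$ are also reverse-complements, and the deletable position $i+4$ in $I$ corresponds under this symmetry to the deletable position $j$ in $J$. The paper therefore splits into three cases rather than two: (A) $\lambda$ has no interval copy of $\beta$ or $\beta^-$, so $\Delta_j$ works; (B) $\lambda$ has no interval copy of $\alpha$ or $\alpha^-$, so $\Delta_{i+4}$ works, \emph{by literally the same argument as (A)}; (C) $\lambda$ has an interval copy of one of $\{\alpha,\alpha^-\}$ and one of $\{\beta,\beta^-\}$, which contradicts minimality of $\pi$ or one of Theorems~\ref{thm-pair} and~\ref{thm-pair3}. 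This three-way split eliminates entirely the ``main obstacle'' you anticipate: there is no separate $\alpha$-side subpermutation bookkeeping to do, because the symmetry transports the $\beta$-side argument over verbatim.

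Your two-way split would still go through. Once you exclude interval copies of $25134$ and $312$ in your second case (the exclusions of $213$ and $2143$ are redundant, as they follow already from ``no down-adjacency''), every interval of $\lambda$ mapped into $I$ has pattern contained in $2413$, since the only non-annihilator subpermutations of $25134$ not contained in $2413$ are $25134$ and $1423$, and $1423$ has an interval copy of $312$. This forces $i'=i+4$ and makes $\Delta_{i+4}$ work---but you have then re-derived by hand exactly what the reverse-complement symmetry gives for free.
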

\begin{proof} Let $\alpha=25134$, $\alpha^-=\alpha_3[\epsilon]=1423$, $\beta=23514$, and $\beta^-=\beta_3[\epsilon]=2314$.
We again fix a minimal counterexample~$\pi\in\cS_n$, with interval copies of $\alpha$ and $\beta$ at 
positions $I=\{i,\dotsc,i+4\}$ and $J=\{j,\dotsc,j+4\}$, respectively. An embedding 
$f\in\sE(*,\pi)$ is \emph{normal} if $[n]\setminus\{i+4,j\}\subseteq\Img(f)$,
as illustrated in Figure~\ref{figure-intervals-in-25134-23514}.
We claim that for any 
$\lambda\in[1,\pi)$ with $\mobp{\lambda}\neq 0$, at least one of $\Delta_{i+4}$, $\Delta_j$ is an 
involution on $\sNEl$.
\begin{figure}[!ht]
	\begin{center}
		\begin{tikzpicture}[scale=0.3]
		\sqat{5}{0}{0};
		\fpoint{1}{2};
		\fpoint{2}{5};
		\fpoint{3}{1};
		\fpoint{4}{3};
		\hpoint{5}{4};
		\sqat{5}{7}{7};
		\hpoint{8}{9};
		\fpoint{9}{10};
		\fpoint{10}{12};
		\fpoint{11}{8};
        \fpoint{12}{11};
	    \path [draw=gray,wavy]	(-1,6.5) -- (13,6.5);
	    \path [draw=gray,wavy]	(6.5,-1) -- (6.5,13);
		\end{tikzpicture}
	\end{center}
\caption{The intervals $25134$ and $23514$ in Theorem~\ref{thm-pair4}. Normal embeddings may omit some of the hollow points.}
\label{figure-intervals-in-25134-23514}
\end{figure}

Choose $\lambda$ as above. In the same way as in the previous proof, we see that if $\lambda$ 
contains neither $\beta$ nor $\beta^-$ as an interval, $\Delta_j$ is an involution on~$\sNEl$. 
Symmetrically, if $\lambda$ contains neither $\alpha$ nor $\alpha^-$ as intervals, then 
$\Delta_{j+4}$ is an involution on~$\sNEl$.

Finally, suppose $\lambda$ contains at least one of $\{\alpha,\alpha^-\}$ and at least one of 
$\{\beta,\beta^-\}$ as interval. Then either $\lambda$ contradicts the minimality of $\pi$ (if it 
contains $\alpha$ and $\beta$), or it is a \mob zero by Theorem~\ref{thm-pair} (if it contains $\alpha^-$ and $\beta^-$) or by Theorem~\ref{thm-pair3} (if it contains $\alpha^-$ and $\beta$, or $\alpha$ and $\beta^-$). 
This is a contradiction.
\end{proof}

With the help of the new annihilator pairs established in Theorems~\ref{thm-pair} 
to~\ref{thm-pair4}, we are able to present several new examples of annihilators.

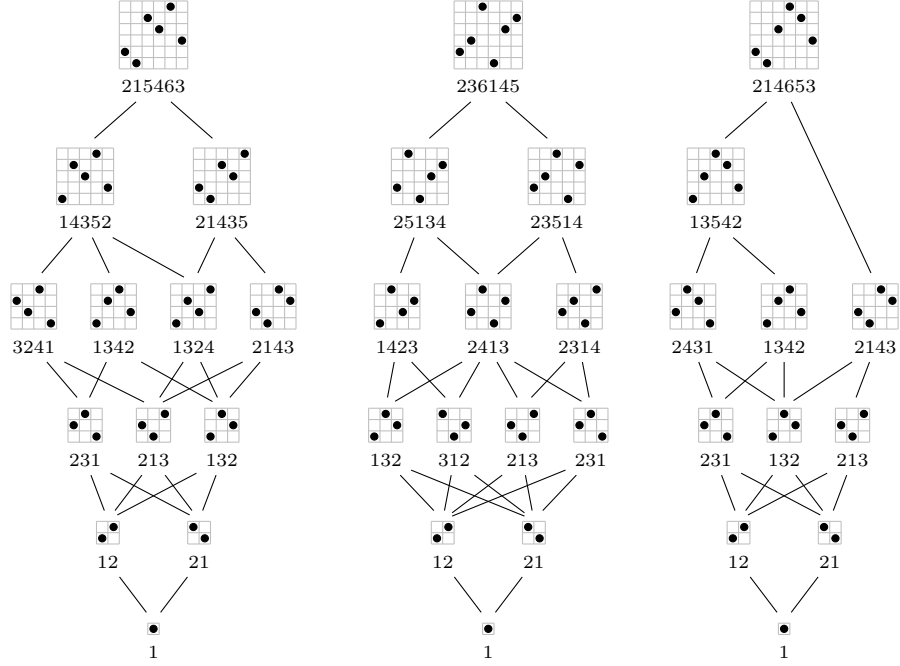
\begin{figure}[!ht]
%
%
%
	\begin{subfigure}[t]{0.32\textwidth}
		\begin{tikzpicture}[scale=0.15]
		\tikzmath{\y=0;};
		\permnode{ 0}{\y}{1};
		\tikzmath{\y=8;};
		\permnode{-4}{\y}{12};
		\permnode{ 4}{\y}{21};
		\tikzmath{\y=\y+9;};
		\permnode{-6}{\y}{231};
		\permnode{ 0}{\y}{213};
		\permnode{ 6}{\y}{132};
		\tikzmath{\y=\y+10;};
		\permnode{-10.5}{\y}{3241};
		\permnode{ -3.5}{\y}{1342};
		\permnode{  3.5}{\y}{1324};
		\permnode{ 10.5}{\y}{2143};
		\tikzmath{\y=\y+11;};
		\permnode{ -6}{\y}{14352};
		\permnode{  6}{\y}{21435};
		\tikzmath{\y=\y+12;};
		\permnode{  0}{\y}{215463};
		\link{12}{1};
		\link{21}{1};
		\link{231}{12};
		\link{231}{21};
		\link{213}{12};
		\link{213}{21};
		\link{132}{12};
		\link{132}{21};
		\link{3241}{231};
		\link{3241}{213};
		\link{1342}{231};
		\link{1342}{132};
		\link{1324}{213};
		\link{1324}{132};
		\link{2143}{213};
		\link{2143}{132};
		\link{14352}{3241};
		\link{14352}{1342};
		\link{14352}{1324};
		\link{21435}{1324};
		\link{21435}{2143};
		\link{215463}{14352}
		\link{215463}{21435}
		\end{tikzpicture}					
	\end{subfigure}
	\qquad    
	\begin{subfigure}[t]{0.26\textwidth}
		\begin{tikzpicture}[scale=0.15]
		\tikzmath{\y=0;};
		\permnode{ 0}{\y}{1};
		\tikzmath{\y=8;};
		\permnode{-4}{\y}{12};
		\permnode{ 4}{\y}{21};
		\tikzmath{\y=\y+9;};
		\permnode{-9}{\y}{132};
		\permnode{-3}{\y}{312};
		\permnode{ 3}{\y}{213};
		\permnode{ 9}{\y}{231};
		\tikzmath{\y=\y+10;};
		\permnode{ -8}{\y}{1423};
		\permnode{  0}{\y}{2413};
		\permnode{  8}{\y}{2314};
		\tikzmath{\y=\y+11;};
		\permnode{ -6}{\y}{25134};
		\permnode{  6}{\y}{23514};
		\tikzmath{\y=\y+12;};
		\permnode{  0}{\y}{236145};
		\link{12}{1};
		\link{21}{1};
		\link{132}{12};
		\link{132}{21};
		\link{231}{12};
		\link{231}{21};
		\link{213}{12};
		\link{213}{21};
		\link{312}{12};
		\link{312}{21};
		\link{1423}{132};
		\link{1423}{312};
		\link{2413}{132};
		\link{2413}{231};
		\link{2413}{213};
		\link{2413}{312};
		\link{2314}{213};
		\link{2314}{231};
		\link{25134}{1423};
		\link{25134}{2413};
		\link{23514}{2413};
		\link{23514}{2314};
		\link{236145}{25134}
		\link{236145}{23514}
		\end{tikzpicture}					
	\end{subfigure}
	\qquad      
	\begin{subfigure}[t]{0.26\textwidth}
		\begin{tikzpicture}[scale=0.15]
		\tikzmath{\y=0;};
		\permnode{ 0}{\y}{1};
		\tikzmath{\y=8;};
		\permnode{-4}{\y}{12};
		\permnode{ 4}{\y}{21};
		\tikzmath{\y=\y+9;};
		\permnode{-6}{\y}{231};
		\permnode{ 0}{\y}{132};
		\permnode{ 6}{\y}{213};
		\tikzmath{\y=\y+10;};
		\permnode{-8}{\y}{2431};
		\permnode{ 0}{\y}{1342};
		\permnode{ 8}{\y}{2143};
		\tikzmath{\y=\y+11;};
		\permnode{-6}{\y}{13542};
		\tikzmath{\y=\y+12;};
		\permnode{ 0}{\y}{214653};
		\link{12}{1};
		\link{21}{1};
		\link{231}{12};
		\link{231}{21};
		\link{213}{12};
		\link{213}{21};
		\link{132}{12};
		\link{132}{21};
		\link{2431}{231};
		\link{2431}{132};
		\link{1342}{231};
		\link{1342}{132};
		\link{2143}{132};
		\link{2143}{213};
		\link{13542}{2431};
		\link{13542}{1342};
		\link{214653}{13542}
		\link{214653}{2143}
		\end{tikzpicture}		
	\end{subfigure}
\caption{The three annihilators from Theorem~\ref{thm-annihil}, and the posets of their           
subpermutations. The figures omit the permutations with opposing adjacencies, as well as 
the permutations with an interval copy of a permutation of the form 
$\alpha\oplus1\oplus\beta$.}
\label{fig-annihil}
\end{figure}

\begin{theorem}\label{thm-annihil}
Each of the three permutations $215463$, $236145$ and $214653$ is a \mob annihilator.
\end{theorem}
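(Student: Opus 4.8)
The plan is to treat all three permutations by a single method that fuses the \emph{diamond-tipped} reduction from the first proof of Theorem~\ref{theorem-PMF-opposing-adjacencies} with the inflation induction of Proposition~\ref{pro-narrow}; in effect I want a ``diamond'' analogue of that proposition. For each $\phi\in\{215463,236145,214653\}$ I would first read off from Figure~\ref{fig-annihil} the two maximal proper subpermutations $\phi_1,\phi_2$ that are not themselves annihilators, together with their meet $\phi_0$: namely $(\phi_1,\phi_2,\phi_0)=(14352,21435,1324)$, then $(25134,23514,2413)$, and then $(13542,2143,132)$. The goal is to show that for every $\tau$ and every position $i$ the interval $[1,\inflatesome{\tau}{i}{\phi}]$ can be turned, by deleting \mob zeros, into a diamond-tipped interval with core $(\inflatesome{\tau}{i}{\phi_1},\inflatesome{\tau}{i}{\phi_2},\inflatesome{\tau}{i}{\phi_0})$, whence $\mobp{\inflatesome{\tau}{i}{\phi}}=0$ by Facts~\ref{fac-del} and~\ref{fac-nd}.

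The reduction proceeds by induction on $|\tau|$, the base case $\tau=1$ being the assertion that the poset $[1,\phi]$ drawn in Figure~\ref{fig-annihil} is exactly what survives after all \mob zeros are removed. Concretely, I would show that every $\gamma\in[1,\inflatesome{\tau}{i}{\phi})$ lying below neither $\inflatesome{\tau}{i}{\phi_1}$ nor $\inflatesome{\tau}{i}{\phi_2}$ is a \mob zero. Writing $\gamma=\inflatesome{\btau}{j}{\bphi}$ with $\bphi\le\phi$ and $\btau\le\tau$, this condition forces $\bphi$ either to equal $\phi$, in which case $\btau<\tau$ and the inductive hypothesis applies, or to be one of the subpermutations of $\phi$ omitted from Figure~\ref{fig-annihil}. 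Each omitted subpermutation is itself an annihilator, certified in one of two ways: it has opposing adjacencies (Theorem~\ref{theorem-PMF-opposing-adjacencies}), or it has an interval copy of a permutation of the form $\alpha\oplus1\oplus\beta$ (Corollary~\ref{cor-sum}) or, by the complement symmetry of the poset, of the complement of such a permutation; together these two decomposable families already account for every interval copy of $123$ and of $321$, which is what rules out all the ``monotone'' survivors. Deleting all these $\gamma$ leaves the union of the two arms together with $\inflatesome{\tau}{i}{\phi}$ on top.

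The main obstacle is the bookkeeping in this step, and it splits into two parts. First, one must verify that \emph{every} subpermutation of $\phi$ outside the two arms is genuinely caught by one of the certificates above; this is finite but case-heavy, and it is the part most likely to hide an error. Second, and more delicate, after the first round of deletions one still has to show that the two surviving arms meet in a single principal ideal, that is, that $[1,\inflatesome{\tau}{i}{\phi_1}]\cap[1,\inflatesome{\tau}{i}{\phi_2}]$ collapses to $[1,\inflatesome{\tau}{i}{\phi_0}]$ after deleting a further set of \mob zeros. The elements obstructing this are the ``cross'' permutations that sit below both arms but not below $\inflatesome{\tau}{i}{\phi_0}$, and they arise only through the interaction of $\tau$ with the two arms, hence only for $|\tau|>1$. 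This is exactly where the annihilator pairs of Theorems~\ref{thm-pair}--\ref{thm-pair4} are needed: for each $\phi$ the two arms are, or respectively contain, one of these pairs (e.g.\ $25134,23514$ for $236145$, and $2143$ together with the interval copy of $2431$ inside $13542$ for $214653$), so a cross element that realizes one member of the pair through the first arm and the other, disjointly, through the second is a \mob zero. Once both points are dispatched, the remaining poset is diamond-tipped and Fact~\ref{fac-nd} closes all three cases at once.
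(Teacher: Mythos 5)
Your proposal follows the paper's own proof in all essentials: the same cores $(\phi_1,\phi_2,\phi_0)$ (the paper's $(\beta,\beta',\gamma)$), the same induction on $|\tau|$ with the base case read off Figure~\ref{fig-annihil}, the same first round of deletions via the decomposition $\gamma=\inflatesome{\btau}{j}{\bphi}$ (induction when $\bphi=\phi$, annihilator certificates otherwise), and the same final reduction to a diamond-tipped interval. Your explicit addition of the complement family (interval copies of skew sums $\alpha\ominus1\ominus\beta$, e.g.\ the copy of $321$ inside $21543$) is correct and genuinely needed; the figure caption omits it.

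The gap is in the cross-element step, and it is a real one. You certify a cross element only when it contains, disjointly, the two designated pair members, ``one through each arm''. But an element $\lambda$ with $\lambda\le\inflatesome{\tau}{i}{\phi_1}$, $\lambda\le\inflatesome{\tau}{i}{\phi_2}$ and $\lambda\not\le\inflatesome{\tau}{i}{\phi_0}$ is only forced to contain an interval copy of \emph{some} non-annihilator $\beta^L\in[1,\phi_1]\setminus[1,\phi_0]$ and of some non-annihilator $\beta^R\in[1,\phi_2]\setminus[1,\phi_0]$, and these may be proper subpermutations of the arms containing neither pair member. For $\phi=215463$ this allows $\beta^L\in\{1342,231\}$, which contain no copy of $3241$; those cases are closed by Theorem~\ref{theorem-PMF-opposing-adjacencies} (an up-adjacency in $\beta^L$ against the down-adjacency of $2143$ or $21435$), a tool your cross-element step never invokes. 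Worse, for $\phi=236145$ the combination $(\beta^L,\beta^R)=(1423,2314)$ can occur: neither permutation contains $25134$ or $23514$, and opposing adjacencies is not forced either, since $1423$ and $2314$ have only up-adjacencies. This case needs the annihilator pair $(312,2314)$, a symmetry image (reverse of the inverse) of the pair $(213,2431)$ of Theorem~\ref{thm-pair}, applied to the interval copy of $312$ inside $1423$. So the cross-element step requires a finite but complete case analysis over all pairs $(\beta^L,\beta^R)$, invoking Theorem~\ref{theorem-PMF-opposing-adjacencies} as well as dihedral-symmetry images of the pairs from Theorems~\ref{thm-pair}--\ref{thm-pair4}; as written, your certificate misses these cases, so the deletions you describe do not remove all obstructions and the resulting poset need not be diamond-tipped.
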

\begin{proof}
We first present the proof for the permutation $215463$.
Let $\alpha=215463$, $\beta=\alpha_1[\epsilon]=14352$, $\beta'=\alpha_6[\epsilon]=21435$ and $\gamma=\alpha_{1,6}[\epsilon,\epsilon]=1324$. From 
Figure~\ref{fig-annihil} (left) we see that, after the removal of the annihilators $\alpha_3[\epsilon], \alpha_4[\epsilon]$ and $\alpha_5[\epsilon]$, the 
interval $[1,\alpha]$ becomes diamond-tipped with core $(\beta,\beta',\gamma)$. Hence by Facts~\ref{fac-del} and~\ref{fac-nd} we have $\mu[1,\alpha]=0$.

Let $\pi$ be a permutation of the form $\tau_i[\alpha]$ for some $\tau$ and $i\le|\tau|$. We will 
show, by induction on $|\tau|$, that $\pi$ is a zero. The case $|\tau|=1$ has been proved in the 
previous paragraph.

Assume that $|\tau|>1$. We will show that we can remove some zeros from the interval $[1,\pi]$ to 
end up with a diamond-tipped interval with core $(\tau_i[\beta],\tau_i[\beta'],\tau_i[\gamma])$. 
Choose a $\lambda\in[1,\pi)$. We can then write $\lambda$ as $\lambda=\btau_j[\alpha^*]$ for some 
$\btau\le \tau$ and some (possibly empty) $\alpha^*\le\alpha$, where $\btau$ has an embedding into 
$\tau$ mapping $j$ to~$i$. 

If $\alpha^*$ is an annihilator, then $\lambda$ is a zero and can be removed. If $\alpha^*=\alpha$, 
then $|\btau|<|\tau|$, and by induction, $\lambda$ is a zero and can be removed. In all the other 
cases, we have $\alpha^*\le \beta$ or $\alpha^*\le \beta'$, and in particular, $\lambda$ belongs to 
$[1,\tau_i[\beta]]\cup[1,\tau_i[\beta']]$.

Suppose now that $\lambda$ is in $[1,\tau_i[\beta]]\cap[1,\tau_i[\beta']]$ but not in 
$[1,\tau_i[\gamma]]$. Since $\lambda\le\tau_i[\beta]$, we can write it as 
$\lambda=\tau_j^L[\beta^L]$, for some $\tau^L\le\tau$ and $\beta^L\le\beta$, where $\tau^L$ has an 
embedding into $\tau$ mapping $j$ to~$i$. Since $\lambda\not\le\tau_i[\gamma]$, we know that 
$\beta^L\not\le\gamma$. This means that 
$\beta^L\in[1,\beta]\setminus[1,\gamma]=\{14352,\allowbreak3241,1342,231\}$. 
Similarly, $\lambda\in[1,\tau_i[\beta']]\setminus[1,\tau_i[\gamma]]$ means that $\lambda$ can be 
written as 
$\lambda=\tau_k^R[\beta^R]$, with $\beta^R\in\{21435,2143\}$. Since $\lambda$ has an interval copy 
of $\beta^L$ 
as well as an interval copy of $\beta^R$, Theorem~\ref{theorem-PMF-opposing-adjacencies} shows that $\lambda$ is a zero if $\beta^L \in \{1342,231\}$, and Theorem~\ref{thm-pair2} shows that $\lambda$ is a zero if $\beta^L \in \{14352,3241\}$ (using that $3241$ is a diagonal reflection of $2431$). Therefore $\lambda$ can be removed. 

After the removal described above, $[1,\pi]$ is transformed into a diamond-tipped interval, showing 
that $\pi$ is a zero.

The arguments for the other two permutations are completely analogous.  For
$236145$ we have $\alpha=236145$, $\beta=25134$, $\beta'=23514$, $\gamma=2413$, $\beta^L\in \{25134,1423\}$ and $\beta^R\in\{23514,2314\}$, and use Theorems~\ref{thm-pair},~\ref{thm-pair3} and~\ref{thm-pair4}.
For $214653$ we have $\alpha=214653$, $\beta=13542$, $\beta'=2143$, $\gamma=132$, $\beta^L\in 
\{13542, 2431,\allowbreak 1342, 231\}$ and $\beta^R\in\{2143,213\}$, and use 
Theorems~\ref{theorem-PMF-opposing-adjacencies}, \ref{thm-pair} and~\ref{thm-pair2}.
\end{proof}

The annihilator $215463$ of Theorem~\ref{thm-annihil} can be written as a sum of two intervals, 
namely $215463=21\oplus 3241$. One might wonder whether the two summands are in fact an annihilator 
pair. This, however, is not the case, as shown by the permutation $32417685=3241\oplus3241$, which 
is not a \mob zero. An analogous example applies to $214653=21\oplus 2431$.

In the proof of Theorem~\ref{thm-annihil}, it was crucial that for each 
$\alpha\in\{215463,\allowbreak236145,\allowbreak214653\}$, the interval $[1,\alpha]$ becomes 
diamond-tipped after the removal of some annihilators. However, this property alone is not 
sufficient to make a permutation $\alpha$ an annihilator. Consider, for instance, the permutation 
$\alpha=214635$. We may routinely check that by removing some annihilators, the interval 
$[1,\alpha]$ can be made diamond-tipped with core $(\beta=13524,\beta'=21435,\gamma=1324)$. This 
implies that $\alpha$ is a \mob zero by Facts~\ref{fac-del} and~\ref{fac-nd}; however, it does not 
imply that $\alpha$ is an annihilator. In fact, $\alpha$ is not an annihilator, as demonstrated by 
the permutation
\begin{align*}
\pi&=582741936_{2,4,5}[\beta,\alpha,\beta']\\
&=9,17,19,21,18,20,2,12,11,14,16,13,15,5,4,7,6,8,1,22,3,10,
\end{align*}
whose principal \mob function is 1, not 0. This example also shows that not all \mob zeros are 
annihilators.

In fact, among permutations of size at most 6, there are up to symmetry four \mob zeros that are 
not annihilators. Apart from the permutation $214635$ pointed out above, there are these three 
more examples: $235614$, $254613$ and $465213$. To see that these three permutations are not 
annihilators, it suffices to check that for any $\alpha\in\{235614,254613,465213\}$, the 
permutation $24153_{2}[\alpha]$ has non-zero principal \mob function. We verified, with the help 
of a computer, that all the \mob zeros of size at most 6 that are not symmetries of the four 
examples above can be shown to be annihilators by our results. 
This data is available at \url{https://iuuk.mff.cuni.cz/~jelinek/mf/zeros.txt}.

\section{Concluding remarks}
\subsection*{Permutations with non-opposing adjacencies}

Given Theorem~\ref{theorem-PMF-opposing-adjacencies},
it is natural to wonder if we can find a  
similar result that applies 
to a permutation with multiple adjacencies, 
but no opposing adjacencies.
One difficulty here is that 
there are permutations that have 
multiple adjacencies, and do not
have
opposing adjacencies, where 
the principal \mob function value 
is non-zero.  
As an example, any permutation 
$\pi = 2,1,4,3,\dots, 2k,2k-1 = \bigoplus^k 21$
has
$\mobp{\pi} = -1$
by the results of Burstein et al.~\cite[Corollary 3]{Burstein2011}.

\subsection*{The asymptotic density of zeros}

Let $d_n$ be the ``density of zeros'' of the M\"obius function, that is, the probability that 
$\mobp{\pi}=0$ for a uniformly random permutation $\pi$ of size~$n$. The asymptotic behaviour 
of~$d_n$ is still elusive.

\begin{problem}
Does the limit $\lim_{n\to\infty} d_n$ exist? And if it does, what is its value?
\end{problem}

Corollary~\ref{cor-incdec} implies that $\liminf_{n\to\infty} d_n \ge (1-1/e)^2\ge 0.3995$. We 
have no upper bound on $d_n$ apart from the trivial bound $d_n\le 1$, but computational data 
suggest that simple permutations very often (though not always) have non-zero principal \mob 
function, where a permutation $\pi$ is \emph{simple} if all its intervals have size 1 or~$|\pi|$.
Since a random permutation is simple with probability approaching 
$1/e^2$~\cite{Albert2003}, this would suggest that $\limsup_{n\to\infty} d_n$ is at 
most~$1-1/e^2\approx 0.8647$.

\begin{table}[!ht]
\[
\begin{array}{lcr}
\begin{array}{lr}
n & d_n \\
\midrule
 1 & 0.0000 \\ 
 2 & 0.0000 \\
 3 & 0.3333 \\
 4 & 0.4167 \\ 
 5 & 0.4833 \\
 6 & 0.5361 \\
\end{array}
& \phantom{xxx} &
\begin{array}{lr}
n & d_n \\
\midrule
 7 & 0.5742 \\
 8 & 0.5942 \\
 9 & 0.6019 \\
10 & 0.6040 \\
11 & 0.6034 \\
12 & 0.6021 \\
\end{array}
\end{array}
\]
\caption{The density of \mob zeros among permutations of length $n$, with $n = 1, \ldots, 12$.}
\label{table-zn-one-to-twelve}
\end{table}

Table~\ref{table-zn-one-to-twelve} shows the values of $d_n$ for  $n=1, \ldots, 12$.
The values are based on data supplied by Jason Smith~\cite{Smith2018} for $1 \leq n \leq 
9$, and calculations performed by the fourth author. 
Data files with the values of the
principal \mob function for all 
permutations of length eleven or less are available from
\url{https://doi.org/10.21954/ou.rd.7171997.v1}.
Based on this somewhat limited numeric 
evidence, we make the following conjecture:

\begin{conjecture}
    \label{conjecture-PMF-zero-61} The values $d_n$ are
    bounded from above by 0.6040.
\end{conjecture}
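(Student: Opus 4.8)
The plan is to rephrase the conjecture in complementary form: writing $d_n = 1-p_n$, where $p_n$ is the probability that a random permutation of length $n$ satisfies $\mobp{\pi}\neq 0$, the bound $d_n\le 0.6040$ is equivalent to the uniform estimate $p_n \ge 0.396$ for every $n$. I would therefore try to exhibit a family of permutations that are provably \emph{not} \mob zeros and whose density is at least $0.396$, and then dispatch the finitely many small cases directly from the exact values in Table~\ref{table-zn-one-to-twelve}, all of which already satisfy the bound.

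The main difficulty is that the present paper supplies a rich toolbox for certifying $\mobp{\pi}=0$ (opposing adjacencies, interval copies of $\alpha\oplus1\oplus\beta$, the sporadic annihilators of Section~\ref{sec-special}) but almost nothing for certifying $\mobp{\pi}\neq 0$. The only source of nonzero values flagged here is the class of simple permutations, of asymptotic density $1/e^2\approx 0.1353$, which falls far short of the required $0.396$. The crux of any proof is thus a \emph{nonzero-forcing local condition}, dual to the annihilator conditions: a feature detectable from bounded-size intervals or a bounded skeleton whose presence guarantees a nonzero principal \mob function. Given such a condition, I would compute the density of permutations possessing it by inclusion–exclusion with explicit $\cO(1/n)$ error terms, exactly in the style of Lemma~\ref{lem-incdec}, while simultaneously bounding the overlap with the zero-forcing features so the densities combine without double counting.

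If a nonzero family of density at least $0.396$ were produced with effective error terms, then $d_n\le 0.6040$ would hold for all $n\ge N_0$ with $N_0$ explicit, and the cases $n<N_0$ would close by the tabulated values. An alternative route would be to prove that $\lim_n d_n$ exists (the subject of the preceding open problem) and equals a value strictly below $0.6040$, together with a unimodality statement pinpointing $n=10$ as the maximizer; this would avoid needing an explicit certificate, but requires asymptotic control we do not currently have. Note that the available asymptotic lower bound only gives $d_n\ge 0.3995$, so the known bounds are far from meeting, which is itself evidence of how little is understood here.

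The hard part—and the reason this is stated as a conjecture—is precisely the nonzero certificate: until such a criterion is found the inclusion–exclusion cannot be closed, and even the asymptotic value of $d_n$ remains open. I expect any serious attack to begin by extracting from the sign-reversing-involution machinery of Proposition~\ref{pro-normal} conditions under which the normal embeddings provably fail to cancel, forcing $\NE(1,\pi)\neq 0$ and hence $\mobp{\pi}\neq 0$ for a positive-density family of $\pi$.
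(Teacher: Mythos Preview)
The statement you were asked to prove is a \emph{conjecture}: the paper offers no proof, only the numerical evidence of Table~\ref{table-zn-one-to-twelve} and the heuristic remark about simple permutations. There is therefore no paper proof to compare your proposal against.

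Your submission is not a proof either, and you say so yourself: you correctly identify that the missing ingredient is a local criterion certifying $\mobp{\pi}\neq 0$ for a family of density at least $0.396$, and you note that no such criterion is currently available. That diagnosis is accurate and matches the paper's own assessment that the asymptotic behaviour of $d_n$ ``is still elusive''. So what you have written is a reasonable research plan, but it does not close the gap; the conjecture remains open.
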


It is natural to look for further ways to identify \mob zeros.
Characterizing all the \mob zeros would be an ambitious goal, 
since $\mobp{\pi}$ might be zero as a result of ``accidental'' cancellations with no deeper structural 
significance for~$\pi$. Moreover, recognizing permutations $\pi$ with $\mobp{\pi}=0$ might be 
NP-hard. 
Characterizing the M\"obius annihilators might be a more tractable goal. It seems natural to characterize the annihilators in terms of \emph{minimal} M\"obius annihilators, which we may define as M\"obius annihilators that have no interval copy of a M\"obius annihilator of smaller size, and likewise no interval copy of an annihilator pair, triple or a larger annihilator multiset. We may define \emph{minimal} annihilator pair, triple or a multiset analogously.

\begin{problem}
Which permutations are M\"obius annihilators? Are there infinitely many minimal M\"obius annihilators that are not of the form $\alpha\oplus1\oplus\beta$?
\end{problem}

It seems likely to us that the proofs of Theorems~\ref{thm-pair}--\ref{thm-pair4} might be extended to give several more annihilator pairs, such as $(312,235614)$. However, we do not see any general pattern in these examples yet.

\begin{problem}
Are there infinitely many minimal annihilator pairs?
\end{problem}

\begin{problem}
Is there any minimal annihilator triple or a larger multiset?
\end{problem}

\section*{Acknowledgements}
We wish to thank Ida Kantor and Martin Tancer for helpful comments.
\bibliographystyle{abbrv} 
\bibliography{perms_ippmf}  
\end{document}